\theoremstyle{plain}
\newtheorem{thm}{\protect\theoremname}[section]
\theoremstyle{definition}
\newtheorem{example}[thm]{\protect\examplename}
\theoremstyle{definition}
\newtheorem{defn}[thm]{\protect\definitionname}
\theoremstyle{plain}
\newtheorem{cor}[thm]{\protect\corollaryname}
\theoremstyle{remark}
\newtheorem{rem}[thm]{\protect\remarkname}
\newtheorem{lem}[thm]{\protect\lemmaname}
\newtheorem{prop}[thm]{\protect\propositionname}
\providecommand{\corollaryname}{Corollary}
\providecommand{\definitionname}{Definition}
\providecommand{\examplename}{Example}
\providecommand{\remarkname}{Remark}
\providecommand{\theoremname}{Theorem}
\providecommand{\lemmaname}{Lemma}
\providecommand{\propositionname}{Proposition}
\journal{Stochastic Processes and Their Applications}
\begin{document}

\begin{frontmatter}



\title{Stochastic Currents of Fractional Brownian Motion: Existence and Regularity}


\author{Martin Grothaus} 

\affiliation{organization={Department of Mathematics, RPTU Kaiserslautern-Landau},
            city={Kaiserslautern},
            postcode={67663}, 
            country={Germany}}
						
\author{Jos{\'e} Lu{\'i}s da Silva} 

\affiliation{organization={CIMA, University of Madeira, Campus da Penteada},
            city={Funchal},
            postcode={9020-105}, 
            country={Portugal}}
						
\author{Herry Pribawanto Suryawan} 

\affiliation{organization={Department of Mathematics, Sanata Dharma University},
            city={Yogyakarta},
            postcode={55281}, 
            country={Indonesia}}	

\author{Thomas Ullrich} 

\affiliation{organization={Department of Mathematics, RPTU Kaiserslautern-Landau},
            city={Kaiserslautern},
            postcode={67663}, 
            country={Germany}}            

\begin{abstract}
By using white noise analysis, we study the integral kernel $\xi(x)$,
$x\in\mathbb{R}^{d}$, of stochastic currents corresponding to fractional Brownian motion with Hurst parameter
$H\in(0,1)$. For $x\in\mathbb{R}^{d}\backslash\{0\}$ and $d\ge1$  we show that
the kernel $\xi(x)$ is well-defined as a Hida distribution for all $H\in(0,1)$.  For $x=0$ and $d=1$, $\xi(0)$ is a Hida distribution for all $H\in(0,1)$. For $d\ge2$, then $\xi(0)$ is a Hida distribution only for $H\in(0,1/d)$. 
For $d=1$, $x \neq 0$, and $H \in (0,1)$, we show that $\xi(x) \in \mathcal{G}'$, the space of regular generalized functions. Elements of the space $\mathcal{G}'$ and elements from the negative Sobolev--Watanabe distribution spaces share the property that partial sums of their chaos decomposition are square integrable functions. More precisely, we show that $\xi(x) \in \mathcal{G}_{-s} \subset \mathcal{G}'$ for $x \neq 0$, $H \in (0,1)$, and all $s > 0$.
\end{abstract}

\begin{graphicalabstract}
\end{graphicalabstract}

\begin{highlights}
\item Using white noise analysis, we prove an existence result for stochastic currents of fractional Brownian motion in any dimension and for any Hurst parameter $H\in(0,1)$ as a Hida distribution.  

\item In the one-dimensional case, we establish a regularity result implying the square integrability of partial sums in the chaos decomposition for stochastic currents of fractional Brownian motion.
\end{highlights}

\begin{keyword}


Stochastic current \sep fractional Brownian motion \sep
fractional It{\^o} integral \sep white noise analysis, regular generalized functions.\\
\MSC[2020] 60H40 \sep 60G22 \sep 46F25 \sep 60G20
\end{keyword}

\end{frontmatter}



\section{Introduction}

The concept of currents has its origins in geometric measure theory. A typical $1$-current is given by
\[
\varphi\mapsto\int_{0}^{T}(\varphi(\gamma(t)),\gamma'(t))_{\mathbb{R}^{d}}\,\mathrm{d}t,\qquad0<T<\infty,\quad d\in\mathbb{N},
\]
where $\varphi:\mathbb{R}^{d}\rightarrow\mathbb{R}^{d}$ and $[0,T] \ni t \mapsto \gamma(t) \in \mathbb{R}^{d}$
is a rectifiable curve. The interested reader
may find definitions, results, and applications on the subject in the
books \cite{Federer1996, Mor16}.

To obtain its integral kernel, one can propose the ansatz
\[
\zeta(x):=\int_{0}^{T}\delta(x-\gamma(t))\gamma'(t)\,\mathrm{d}t,\quad x\in{\mathbb R}^d,
\]
where $\delta$ is the Dirac delta function, and try to give a mathematically rigorous meaning in an appropriate space of generalized functions.

The stochastic analog of the integral kernel $\zeta(x)$ rises if we substitute
the deterministic curve $\gamma$ by the sample path
of a stochastic process $X$ taking values in $\mathbb{R}^d$. Hence, we obtain the following kernel
\begin{equation}
\xi(x):=\int_{0}^{T}\delta(x-X(t))\,\mathrm{d}X(t), \quad x\in{\mathbb R}^d.\label{eq:stochastic-integral}
\end{equation}
The stochastic integral (\ref{eq:stochastic-integral}) must be
properly defined. More precisely, we choose $X$ to be a $d$-dimensional
fractional Brownian motion (fBm) $B_{H}$, with Hurst parameter $H\in(0,1)$. Therefore, the main objective of our study is
\begin{equation}
\xi(x):=\int_{0}^{T}\delta(x-B_{H}(t))\,\mathrm{d}B_{H}(t).\label{eq:Wick-type-Sintegral}
\end{equation}
The stochastic integral is
interpreted as a fractional It{\^o} integral developed in \cite{B03}. Other approaches, such as Malliavin calculus and stochastic integrals through regularization, were investigated in \cite{Flandoli2005,Flandoli2009,Flandoli2010} to study $\xi$. In \cite{Flandoli2005,Flandoli2009,Flandoli2010}, pathwise with probability one, $\xi$ was constructed as a random variable taking values in a negative Sobolev space. I.e., for a fixed path, $\xi$ is a generalized function and is therefore not pointwise defined in $x\in\mathbb{R}^{d}$. Moreover, in \cite{Flandoli2010}, for all $x \in \mathbb{R}$, the kernel $\xi(x)$ was constructed in a negative Sobolev--Watanabe distribution space using Malliavin calculus for $H \in [1/2,1)$.

In this work, we show that, if $x\in\mathbb{R}^{d}\backslash\{0\}$, $\xi(x)$ is a Hida distribution for any $H\in (0,1)$ and $d\ge 1$ while for $x=0\in \mathbb{R}^d$, $\xi(x)$ is a Hida distribution whenever $dH<1$; see Theorem~\ref{thm:current-Hdistr} and Corollary~\ref{rem:main-result}. For $d=1$, $x \neq 0$, and $H \in (0,1)$, we show that $\xi(x) \in \mathcal{G}'$, the space of regular generalized functions. The new idea to prove this type of regularity can be found in the key Lemma \ref{lem:newStranslem}. Elements of the space $\mathcal{G}'$ and elements from the negative Sobolev--Watanabe distribution spaces share the property that the partial sums of their chaos decomposition are square integrable functions. More precisely, we show that $\xi(x) \in \mathcal{G}_{-s} \subset \mathcal{G}'$ for $x \neq 0$, $H \in (0,1)$, and all $s > 0$; see Theorem \ref{thm:main-theorem}. The question of whether $\xi(x) \in \mathcal{G}_{0} = L^2(\mu)$ remains open. Should this be the case, the integral kernel of stochastic currents would even exist as a real-valued random variable.


The paper is organized as follows. In Section \ref{sec:WNA}, we recall
the background from the white noise analysis needed later. In Section
\ref{sec:Stochastic-Current}, we prove the existence result, and in Section \ref{sec:Regularity-SC-fBm}, we prove the regularity result. The appendix contains the proofs for certain technical lemmas.

\section{Gaussian White Noise Calculus}

\label{sec:WNA}In this section, we briefly recall the concepts and
results of white noise analysis used throughout this work. For a detailed
explanation; see, e.g., ~\cite{BK88b}, \cite{Hid75}, \cite{HKPS93},
\cite{HOUZ10}, \cite{Kuo96}, \cite{O94}.

\subsection{Hida Test and Generalized Functions\label{subsec:Hida}}

The starting point of the white noise analysis is the real Gelfand triple
\[
S_{d}\subset L_{d}^{2}\subset S'_{d},
\]
where $L_{d}^{2}:=L^{2}(\mathbb{R},\mathbb{R}^{d})$, $d\geq1$, is
the real Hilbert space of all vector-valued square-integrable functions
with respect to the Lebesgue measure on $\mathbb{R}$, $S_{d}$ and
$S'_{d}$ is the Schwartz space of vector-valued test functions
and tempered distributions, respectively. We denote the $L_{d}^{2}$-norm
by $\|\cdot\|_{2}$ and the dual pairing between $S'_{d}$ and $S_{d}$
by $\left\langle \cdot,\cdot\right\rangle $, which is a bilinear extension of the inner product on $L_{d}^{2}$, that is,
\[
\langle \varphi,f\rangle=\sum_{i=1}^{d}\int_{\mathbb{R}}\varphi_{i}(x)f_{i}(x)\,\mathrm{d}x,
\]
for all $f=(f_{1},...,f_{d})\in L_{d}^{2}$ and all $\varphi=(\varphi_{1},...,\varphi_{d})\in S_{d}$.
By the Minlos theorem, there is a unique probability measure $\mu$
on the $\sigma$-algebra $\mathcal{B}$ genera\-ted by the cylinder
sets on $S'_{d}$ with characteristic function $C$ given by 
\[
C(\varphi):=e^{-\frac{1}{2}\|\varphi\|_{2}^{2}}=\int_{S'_{d}}e^{i\left\langle \varphi,\omega\right\rangle }\,\mathrm{d}\mu(\omega),\quad\varphi\in S_{d}.
\]
In this way, we have defined the white noise measure space $(S'_{d},\mathcal{B},\mu)$.
Within this formalism, one can show that
\[
\left(\langle \mathbbm{1}_{[0,t)},w_{1}\rangle,...,\langle \mathbbm{1}_{[0,t)},w_{d}\rangle\right),\quad w=(w_{1},...,w_{d})\in S'_{d}, \, t \ge 0,
\]
has a continuous modification $B(t,w)$, which is a $d$-dimensional Brownian motion. Here, $\mathbbm{1}_{A}$ denotes the indicator function of the Borel
set $A \subset {\mathbb{R}}$ and $\langle \mathbbm{1}_{A},w_{i}\rangle$, $i = 1, \ldots, d$ is defined as an $L^2(\mu)$-limit.
For an arbitrary Hurst parameter $0<H<1$,
\[
\left(\langle \eta_{t},w_{1}\rangle,...,\langle \eta_{t},w_{d}\rangle\right),\quad w=(w_{1},...,w_{d})\in S'_{d},\;\eta_{t}:=M_-^H\mathbbm{1}_{[0,t)},
\]
has a continuous modification $B_{H}(t,w)$ that is a $d$-dimensional fBm. On the one hand, for a real-valued function $f$ and $\frac{1}{2}<H<1$, the operator $M_-^H$ is defined by
\begin{equation}
(M_-^{H}f)(x):=K_H\cdot(I_-^Hf)(x):=\frac{K_{H}}{\Gamma\left(H-\frac{1}{2}\right)}\int_{0}^{\infty}f(x+t)t^{H-\frac{3}{2}}\,\mathrm{d}t,\label{08eq1}
\end{equation}
provided that the integral exists for almost all $x\in\mathbb{R}$ and the normalization constant is given by
\[
K_{H}:=\Gamma\left(H+\frac{1}{2}\right)\left(\frac{1}{2H}+\int_{0}^{\infty}\left((1+s)^{H-\frac{1}{2}}-s^{H-\frac{1}{2}}\right)\mathrm{d}s\right)^{-\frac{1}{2}}.
\]
On the other hand, for a real-valued function $f$ and $0<H<\frac{1}{2}$, the operator $M_-^H$ has the form
\begin{equation}
(M_-^{H}f)(x):=K_H\cdot(D_-^Hf)(x):=\frac{(\frac{1}{2}-H)K_{H}}{\Gamma\left(H+\frac{1}{2}\right)}\lim_{\varepsilon\to0^{+}}\int_{\varepsilon}^{\infty}\frac{f(x)-f(x+y)}{y^{\frac{3}{2}-H}}\,\mathrm{d}y,\label{08eq2}
\end{equation}
if the limit exists for almost all $x\in\mathbb{R}$. For $H=1/2$, $M_{-}^{1/2}$ is the identity operator. For more
details, see, e.g., \cite{B03}, \cite{PTaqqu00}, and the references therein. 

To introduce the corresponding fractional noise $W_H$, we first need to define the operator $M_+^H$, which is dual to the operator $M_-^H$. Therefore, for $\frac{1}{2}<H<1$ we define
\[
(M_+^Hf)(x):=K_H\cdot(I_+^Hf)(x):=K_H\cdot(f*g_H)(x),
\]
where $g_H(t):=\frac{1}{\Gamma(H)}t^H$, $t>0$, whenever the convolution integral exists for almost all $x\in\mathbb{R}$. For $0<H<\frac{1}{2}$ the operator $M_+^H$ is defined by
\[
(M_+^Hf)(x):=K_H\cdot(D_+^Hf)(x):=\frac{(\frac{1}{2}-H)K_{H}}{\Gamma\left(H+\frac{1}{2}\right)}\lim_{\varepsilon\to0^{+}}\int_{\varepsilon}^{\infty}\frac{f(x)-f(x-y)}{y^{\frac{3}{2}-H}}\,\mathrm{d}y,
\]
if the limit exists for almost all $x\in\mathbb{R}$.

There are several examples of functions $f$ for which $M_{\pm}^Hf$ exists for any $H\in(0,1)$. For example, $f=\mathbbm{1}_{[0,t)}$,
$t\geq0$, or $f\in S_{1}(\mathbb{R})$. For functions $f_{1}$, $f_{2}$
being either of these two types, it is easy to prove the following
equality
\[
\int_{\mathbb{R}}f_{1}(x)(M_{-}^Hf_{2})(x)\,\mathrm{d}x=\int_{\mathbb{R}}(M_{+}^{H}f_{1})(x)f_{2}(x)\,\mathrm{d}x,
\]
showing that $M_{-}^H$ and $M_{+}^{H}$ are dual operators, cf.~Eq.~(12) in \cite{B03}.

The corresponding $d$-dimensional fractional noise $W_{H}(t)$ in the sense of Hida distributions
is given by 
\begin{equation} \label{eq:fractional-WN}
W_{H}(t) := (W_{H,1}(t),\ldots,W_{H,d}(t)):=(\langle M_{-}^{H}\delta_t,P_{1}\rangle,\ldots,\langle  M_{-}^{H}\delta_t,P_{d}\rangle),
\end{equation}
where $P_i: S'_{d} \to S'_{1}, i = 1, \ldots, d$, denotes the projection on the $i$-th compo\-nent, see Definition 2.18 in \cite{B03} for $d=1$. The generalized function $M_{-}^{H}\delta_t\in S'_1$ is defined for any test function $\varphi\in S_1$ by 
\begin{equation}\label{eq:M-minus-delta}
\langle\varphi, M_{-}^{H}\delta_t\rangle:=(M_+^H\varphi)(t).
\end{equation}
 For $H=1/2$, the operator
$M_{+}^{1/2}$ is defined as the identity, and $W_{1/2}(t)=(\langle \delta_t,P_{1}\rangle,\ldots,\langle \delta_t,P_{d}\rangle)$
coincides with the vector-valued white noise.

Let us now consider the complex Hilbert space $L^{2}(\mu):=L^{2}(S'_{d},\mathcal{B},\mu;\mathbb{C})$.
This space is canonically isomorphic to the symmetric Fock space of
symme\-tric square-integrable functions, 
\[
L^{2}(\mu)\simeq\Big(\bigoplus_{k=0}^{\infty}\mathrm{Sym}\,L^{2}(\mathbb{R}^{k},k!\mathrm{d}^{k}x)\Big)^{\otimes d},
\]
leading to the chaos expansion of the elements in $L^{2}(\mu)$, 
\begin{equation}
F(w_{1},...,w_{d})=\sum_{(n_{1},...,n_{d})\in\mathbb{N}_{0}^{d}}\langle F_{(n_{1},...,n_{d})},:w_{1}^{\otimes n_{1}}:\otimes\cdots\otimes:w_{d}^{\otimes n_{d}}:\rangle,\label{08eq5}
\end{equation}
with kernel functions $F_{(n_{1},...,n_{d})}\in (L^2_{d,\mathbb{C}})^{\hat{\otimes}n}$ and $w=(w_1,\dots,w_d)\in S'_d$.
For simplicity, we use the notation
\begin{eqnarray*}
\bm{n}&=&(n_{1},\cdots,n_{d})\in\mathbb{N}_{0}^{d},\quad n=\sum_{i=1}^{d}n_{i},\quad\bm{n}!=\prod_{i=1}^{d}n_{i}!,\\ 
:w^{\otimes\bm{n}}:&:=&:w_{1}^{\otimes n_{1}}:\otimes\cdots\otimes:w_{d}^{\otimes n_{d}}:,
\end{eqnarray*}
that reduces the chaos expansion in (\ref{08eq5}) to
\[
F(w)=\sum_{\bm{n}\in\mathbb{N}_{0}^{d}}\langle F_{\bm{n}},:w^{\otimes\bm{n}}:\rangle,\quad w\in S'_{d}.
\]

To proceed further, we have to consider a Gelfand triple around the
space $L^{2}(\mu)$. We use the space $(S_{d})'$ of Hida
distributions and the corresponding
Gelfand triple 
\begin{equation}\label{eq:Gelfand-triple}
(S_{d})\subset L^{2}(\mu)\subset(S_{d})'.
\end{equation}
Here $(S_{d})$ is the space of the white noise test functions such that
its dual space (with respect to $L^{2}(\mu)$) is the space $(S_{d})'$.
Instead of reproducing the explicit construction of $\left(S_d\right)'$
(see e.g., \cite{HKPS93}), 
we characterize this space by its $S$-transform in Theorem~\ref{thm:charact-thm-Hida}. We recall that given a $\varphi\in S_{d}$, the Wick exponential is defined by
\[
:\exp(\langle \varphi, w\rangle):\::=\sum_{\bm{n}\in\mathbb{N}_{0}^{d}}\frac{1}{\bm{n}!}\langle \varphi^{\otimes\bm{n}},:w^{\otimes\bm{n}}:\rangle=C(\varphi)e^{\langle \varphi, w\rangle},
\]
where $\varphi^{\otimes\bm{n}}:=\varphi_1^{\otimes n_1}\otimes\dots\otimes\varphi_d^{\otimes n_d}$ and the $S$-transform of  $\Phi\in(S_d)'$ is defined by
\begin{equation}
S\Phi(\varphi):=\left\langle \!\left\langle :\exp(\left\langle\varphi, \cdot\right\rangle ):,\Phi\right\rangle \!\right\rangle ,\quad \varphi\in S_{d}.\label{08eq6}
\end{equation}
Here $\left\langle \!\left\langle \cdot,\cdot\right\rangle \!\right\rangle $
denotes the dual pairing between $\left(S_d\right)$ and $\left(S_d\right)'$,
which is a bilinear extension of $\left( \!\left( \cdot,\overline{\cdot}\right) \!\right)$, where $\left( \!\left( \cdot,\cdot\right) \!\right)$ is the sesquilinear inner product on $L^{2}(\mu)$. We observe that the multilinear expansion
of (\ref{08eq6}), 
\[
S\Phi(\varphi):=\sum_{\bm{n}\in\mathbb{N}_{0}^{d}}\langle\varphi^{\otimes\bm{n}}, \Phi_{\bm{n}}\rangle,
\]
extends the chaos expansion to $\Phi\in\left(S_d\right)'$ with distribution
valued kernels $\Phi_{\bm{n}} \in (S'_d)^{\otimes\bm{n}}$ such that
\[
\left\langle \!\left\langle \varphi,\Phi\right\rangle \!\right\rangle =\sum_{\bm{n}\in\mathbb{N}_{0}^{d}}\bm{n}!\langle\varphi_{\bm{n}},\Phi_{\bm{n}}\rangle,
\]
for every test function $\varphi\in(S_{d})$ with kernel
functions $\varphi_{\bm{n}} \in (S_d)^{\otimes\bm{n}}$. This allows us to represent $\Phi$ by its generalized chaos expansion
\[
\Phi=\sum_{\bm{n}\in\mathbb{N}_{0}^{d}} I_{\bm{n}}(\Phi_{\bm{n}}),\quad \Phi_{\bm{n}}\in (S'_d)^{\otimes\bm{n}},
\]
where
\[
\left\langle \!\left\langle \varphi,I_{\bm{n}}(\Phi_{\bm{n}})\right\rangle \!\right\rangle
:= \bm{n}!\langle\varphi_{\bm{n}},\Phi_{\bm{n}}\rangle,\quad \varphi\in(S_{d}).
\]
\begin{example}\label{exa:fwn}
Let $d=1$ and $W_{H}(t)$ be the fractional noise introduced
in \eqref{eq:fractional-WN}. Then its $S$-transform is given by
(cf.~\cite{B03})
\[
SW_{H}(t)(\varphi)=(M_{+}^{H}\varphi)(t),\quad \varphi\in S_{d}.
\]
\end{example}

To characterize the space $(S_d)'$ through its $S$-transform, we first need the following definition.
\begin{defn}[$U$-functional]
\label{def:U-functional}A function $F:S_{d}\rightarrow\mathbb{C}$
is called a $U$-functional whenever
\begin{enumerate}
\item for every $\varphi_{1},\varphi_{2}\in S_{d}$ the mapping $\mathbb{R}\ni\lambda\longmapsto F(\lambda\varphi_{1}+\varphi_{2})$
has an entire extension to $\lambda\in\mathbb{C}$,
\item there are constants $K_{1},K_{2} < \infty$ such that 
\[
\left|F(z\varphi)\right|\leq K_{1}\exp\big(K_{2}|z|^{2}\|\varphi\|^{2}\big),\quad z\in\mathbb{C},\varphi\in S_{d}
\]
for some continuous norm $\|\cdot\|$ on $S_{d}$.
\end{enumerate}
\end{defn}

We are now ready to state the characterization theorem mentioned above.
\begin{thm}[cf.~\cite{PS91}, \cite{KLPSW96}]
\label{thm:charact-thm-Hida}The $S$-transform defines a bijection
between the space $(S_d)'$ and the space of the $U$-functionals.
\end{thm}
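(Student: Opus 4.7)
The plan is to establish the bijection in two directions, relying on the standard construction (see \cite{HKPS93}) of $(S_d)^*$ as an inductive limit of Hilbert spaces in which every Hida distribution has a generalized chaos expansion $\Phi = \sum_{\bm n} I_{\bm n}(\Phi_{\bm n})$ with kernels satisfying a weighted Fock norm estimate. The forward direction will amount to verifying the two conditions of Definition \ref{def:U-functional} for $S\Phi$, while the reverse direction will reconstruct a Hida distribution from the Taylor coefficients of a given $U$-functional.

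For the forward direction I would substitute the chaos expansion of $\Phi$ into the definition (\ref{08eq6}) of the $S$-transform to obtain $S\Phi(\varphi) = \sum_{\bm n} \langle \Phi_{\bm n}, \varphi^{\otimes \bm n}\rangle$. Since $\Phi$ lies in some Hilbert space of the inductive limit, bounding each kernel $\Phi_{\bm n}$ in a dual Sobolev norm and pairing against $\varphi^{\otimes \bm n}$ in the corresponding positive Sobolev norm on $S_d$ gives an absolutely convergent series for every $\varphi \in S_d$. The same bound applied at $\lambda \varphi_1 + \varphi_2$ shows that $\lambda \mapsto S\Phi(\lambda \varphi_1 + \varphi_2)$ extends to an entire function, while applied at $z\varphi$ it gives the Gaussian growth required for a $U$-functional. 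Injectivity of $S$ then follows from the density of the Wick exponentials in $(S_d)$: if $S\Phi \equiv 0$ then the dual pairing of $\Phi$ against a total set vanishes, forcing $\Phi = 0$.

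For the reverse direction, given a $U$-functional $F$, I would recover candidate kernels via Taylor expansion at the origin. For fixed $\varphi \in S_d$ the entire function $\lambda \mapsto F(\lambda \varphi)$ expands as $\sum_n \lambda^n a_n(\varphi)/n!$, and polarization of the coefficients produces symmetric $n$-linear forms $\hat F_n$ on $S_d^{\otimes n}$. Cauchy's integral formula on a circle of radius $r$, combined with the $U$-growth $|F(\lambda \varphi)| \le K_1 \exp(K_2 r^2 \|\varphi\|^2)$, yields $|a_n(\varphi)| \le K_1\, n!\, r^{-n} \exp(K_2 r^2 \|\varphi\|^2)$, and optimizing in $r$ produces the crucial bound $|\hat F_n(\varphi^{\otimes n})| \le K_1 (2eK_2/n)^{n/2} \|\varphi\|^n$. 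By continuity of $\hat F_n$ in the norm $\|\cdot\|$, each $\hat F_n$ extends to a distribution $F_{\bm n} \in (S'_d)^{\otimes \bm n}$, and I would set $\Phi := \sum_{\bm n} I_{\bm n}(F_{\bm n})$.

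The hard part, I expect, is the last step: showing that $\Phi$ actually defines an element of $(S_d)^*$. This requires choosing the Hilbert-space parameters of the inductive limit large enough that the continuous norm $\|\cdot\|$ in the $U$-functional bound is dominated by a Sobolev-type norm on $S_d$, and that an additional geometric weight in the Fock-norm expansion absorbs the combinatorial factor $(2eK_2)^n n!/n^n$ coming from the Cauchy estimate after summing the kernel-norm series. Once $\Phi \in (S_d)^*$ is secured, matching the Taylor coefficients at the origin of $S\Phi$ with those of $F$ concludes $S\Phi = F$ and closes the bijection.
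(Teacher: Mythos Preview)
The paper does not supply a proof of Theorem~\ref{thm:charact-thm-Hida}; it is stated as a known result with references to \cite{PS91} and \cite{KLPSW96}, and is then used as a black box (notably via Corollary~\ref{cor:hida_integral}) in the subsequent arguments. So there is no ``paper's own proof'' to compare against.

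That said, your outline is essentially the Potthoff--Streit argument from \cite{PS91}: chaos expansion plus Fock-norm control in the forward direction, and Taylor coefficients with Cauchy estimates and optimization in $r$ to recover kernels in the backward direction. This is the standard route and, at the level of a sketch, is correct. One small caution: in the reconstruction step you speak of a single $n$-linear form $\hat F_n$ and then of kernels $F_{\bm n}$ indexed by multi-indices; in the $d$-dimensional setting you must actually decompose the homogeneous degree-$n$ part into its $\bm n$-components (via the tensor-product structure $S_d = S_1^{\oplus d}$) before identifying each $F_{\bm n}\in (S_d')^{\hat\otimes\bm n}$. This is routine but worth making explicit if you were to write the proof out.
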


As a consequence of Theorem \ref{thm:charact-thm-Hida}, one may derive
the following statement, which concerns the Bochner integration
of a family of the same type of distributions. For more details and
proofs; see, e.g.,~\cite{PS91}, \cite{HKPS93}, \cite{KLPSW96}.
\begin{cor}
\label{cor:hida_integral}Let $(\Omega,\mathcal{F},m)$ be a measure
space and $\lambda\mapsto\Phi_{\lambda}$ be a mapping from $\Omega$
to $(S_{d})'$. We assume that the $S$-transform of $\Phi_{\lambda}$
fulfills the following two properties:
\begin{enumerate}
\item The mapping $\lambda\mapsto S\Phi_{\lambda}(\varphi)$ is measurable
for every $\varphi\in S_{d}$.
\item The function $S\Phi_{\lambda}$ obeys the estimate
\[
|S\Phi_{\lambda}(z\varphi)|\leq C_{1}(\lambda)e^{C_{2}(\lambda)|z|^{2}\|\varphi\|^{2}},\quad z\in\mathbb{C},\varphi\in S_{d},
\]
for some continuous norm $\Vert\cdot\Vert$ on $S_{d}$ and
$C_{1}\in L^{1}(\Omega,m)$, $C_{2}\in L^{\infty}(\Omega,m)$.
\end{enumerate}
Then 
\[
\int_{\Omega}\Phi_{\lambda}\,\mathrm{d}m(\lambda)\in(S_{d})',
\]
and 
\[
S\left(\int_{\Omega}\Phi_{\lambda}\,\mathrm{d}m(\lambda)\right)(\varphi)=\int_{\Omega}S\Phi_{\lambda}(\varphi)\,\mathrm{d}m(\lambda),\qquad\varphi\in S_{d}.
\]

\end{cor}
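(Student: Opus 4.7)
The plan is to use the characterization theorem (Theorem \ref{thm:charact-thm-Hida}) as the single guiding principle: we define the Bochner integral as the unique Hida distribution whose $S$-transform equals the ordinary scalar integral $F(\varphi):=\int_\Omega S\Phi_\lambda(\varphi)\,\mathrm{d}m(\lambda)$, and then verify that this $F$ is a $U$-functional.

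First, I would check that $F(\varphi)$ is well-defined for every $\varphi\in S_d$. Using hypothesis (2) with $z=1$ gives $|S\Phi_\lambda(\varphi)|\le C_1(\lambda)e^{\|C_2\|_\infty\|\varphi\|^2}$, and since $C_1\in L^1(\Omega,m)$ the integrand is $m$-integrable; measurability is guaranteed by hypothesis (1). Next, to obtain the growth estimate required by Definition \ref{def:U-functional}(2), I would simply integrate the pointwise bound:
\[
|F(z\varphi)|\le\int_\Omega C_1(\lambda)e^{C_2(\lambda)|z|^2\|\varphi\|^2}\,\mathrm{d}m(\lambda)\le\|C_1\|_{L^1}\,e^{\|C_2\|_\infty|z|^2\|\varphi\|^2},
\]
so $F$ satisfies the $U$-functional growth condition with $K_1=\|C_1\|_{L^1}$ and $K_2=\|C_2\|_\infty$, both finite by hypothesis.

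The step that needs more care is the entire extension property (Definition \ref{def:U-functional}(1)). For fixed $\varphi_1,\varphi_2\in S_d$, each slice $z\mapsto S\Phi_\lambda(z\varphi_1+\varphi_2)$ is entire by assumption, and for $|z|\le R$ the family is dominated by the $m$-integrable function $\lambda\mapsto C_1(\lambda)e^{\|C_2\|_\infty(R\|\varphi_1\|+\|\varphi_2\|)^2}$. I would then invoke a standard Morera--Fubini argument: for any closed triangle $\Delta\subset\mathbb{C}$ contained in the disk of radius $R$, Fubini's theorem gives
\[
\oint_{\partial\Delta}\!\int_\Omega S\Phi_\lambda(z\varphi_1+\varphi_2)\,\mathrm{d}m(\lambda)\,\mathrm{d}z=\int_\Omega\!\oint_{\partial\Delta}S\Phi_\lambda(z\varphi_1+\varphi_2)\,\mathrm{d}z\,\mathrm{d}m(\lambda)=0
\]
by Cauchy's theorem applied slicewise. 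Since $R$ is arbitrary and continuity in $z$ follows from dominated convergence, Morera's theorem yields that $z\mapsto F(z\varphi_1+\varphi_2)$ is entire.

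Having verified both conditions, Theorem \ref{thm:charact-thm-Hida} produces a unique $\Phi\in(S_d)^*$ with $S\Phi=F$, and by definition we set $\int_\Omega\Phi_\lambda\,\mathrm{d}m(\lambda):=\Phi$, which establishes the two displayed identities of the corollary simultaneously. The only delicate point is the interchange of the contour integral with the $m$-integral in the Morera step; everything else is a direct rewriting of the hypotheses into the $U$-functional estimates.
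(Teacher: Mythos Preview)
The paper does not give its own proof of this corollary; it is quoted as a known consequence of Theorem~\ref{thm:charact-thm-Hida} with references to \cite{PS91}, \cite{HKPS93}, \cite{KLPSW96}. Your outline is precisely the standard argument found in those sources: define $F(\varphi)=\int_\Omega S\Phi_\lambda(\varphi)\,\mathrm{d}m(\lambda)$, verify the two $U$-functional conditions, and invoke the characterization theorem.

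One point deserves a closer look. In the Morera step you assert the uniform bound
\[
|S\Phi_\lambda(z\varphi_1+\varphi_2)|\le C_1(\lambda)\,e^{\|C_2\|_\infty(R\|\varphi_1\|+\|\varphi_2\|)^2},\qquad |z|\le R,
\]
but hypothesis~(2) is stated only for arguments of the form $z\varphi$ with $\varphi\in S_d$ real; when $z$ is genuinely complex, $z\varphi_1+\varphi_2$ is not an element of $S_d$, so the bound does not apply directly. What hypothesis~(2) does give (taking $z=1$) is the above estimate along the \emph{real} axis in $z$, which is not by itself enough to dominate on a disk. The cited references close this gap by combining the real-axis bound with a Cauchy estimate: since each $S\Phi_\lambda$ is already a $U$-functional, the slice is entire of order~$\le 2$, and one controls its Taylor coefficients at $z=0$ uniformly in $\lambda$ by $C_1(\lambda)$ times constants depending only on $\|\varphi_1\|,\|\varphi_2\|,\|C_2\|_\infty$. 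With that adjustment your argument is complete and agrees with the literature.
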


\begin{example}[Donsker's delta function]
\label{exa:S-transform-delta}As a typical example of a Hida distribution,
we have the Donsker delta function needed later. More precisely,
the following Bochner integral is a well-defined element in $(S_{d})'$:
\[
\Phi_{x,H}:=\delta(x-B_{H}(t))=\frac{1}{(2\pi)^{d}}\int_{\mathbb{R}^{d}}e^{i(\lambda,x-B_{H}(t))_{\mathbb{R}^{d}}}\,\mathrm{d}\lambda,\quad x\in\mathbb{R}^{d}.
\]
In fact, the $S$-transform of $\Phi_{x,H}$ for any $z\in\mathbb{C}$ and
$\varphi\in S_{d}$ is given by
\begin{equation}
S\Phi_{x,H}(z\varphi)=\frac{1}{(2\pi t^{2H})^{d/2}}\exp\left(-\frac{1}{2t^{2H}}\sum_{j=1}^{d}(x_{j}-\langle z\varphi_{j},\eta_{t}\rangle)^{2}\right).\label{eq:S-Donsker}
\end{equation}
\end{example}

It is well known that the Wick product is a well-defined operation
in Gaussian analysis; see, for example, \cite{KLS96}, \cite{HOUZ10}, and \cite{KSWY95}.
\begin{defn}
\label{def:Wick_product}For any $\Phi,\Psi\in(S_{d})'$, the Wick
product $\Phi\Diamond\Psi$ is defined by 
\begin{equation}
S(\Phi\Diamond\Psi)=S\Phi\cdot S\Psi.\label{eq:Wick_product}
\end{equation}
Since the space of $U$-functionals is an algebra, by
Theorem~\ref{thm:charact-thm-Hida} there exists a unique element
$\Phi\Diamond\Psi\in(S_{d})'$ such that (\ref{eq:Wick_product})
holds.
\end{defn}

\subsection{Characterization of Regular Generalized Functions\label{sec:Characterisation-Regular}}
In this subsection, we introduce the space of regular generalized functions, which is used in Section~\ref{sec:Regularity-SC-fBm}.
We restrict the construction to $d=1$ and adapt the notation from
Subsection~\ref{subsec:Hida}.

Consider the Wiener-It\^o chaos decomposition of $F\in L^{2}(\mu)$
\[
F=\sum_{n=0}^{\infty}\left\langle f^{(n)},:\cdot^{\otimes n}:\right\rangle ,\quad f^{(n)}\in(L_{1,\mathbb{C}}^{2})^{\hat{\otimes}n},
\]
with 
\begin{equation}
\|F\|_{L^{2}(\mu)}^{2}=\sum_{n=0}^{\infty}n!|f^{(n)}|_{(L_{1,\mathbb{C}}^{2})^{\otimes n}}^{2},\label{eq:L2norm}
\end{equation}
where $:\omega^{\otimes n}:\in(S'_{1,\mathbb{C}})^{\hat{\otimes}n}$
denotes the Wick power of $n$-th order of $\omega\in S'_{1,\mathbb{C}}$ and $\langle\cdot,\cdot\rangle$
denotes the bilinear dual pairing on $S_{\mathbb{C}}(\mathbb{R})^{\otimes n}\times S'_{\mathbb{C}}(\mathbb{R})^{\otimes n}$.
We remark that $\langle\cdot,:\cdot^{\otimes n}:\rangle$ extends
in the first variable to $(L_{1,\mathbb{C}}^{2})^{\hat{\otimes}n}$
in the sense of an $L^{2}(\mu)$-limit.

Now, let $s\in [0,\infty)$ be given. If we require 
\[
\|F\|_{\mathcal{G}_{s}}^{2}:=\sum_{n=0}^{\infty}n!2^{ns}|f^{(n)}|_{(L_{1,\mathbb{C}}^{2})^{\otimes n}}^{2}
\]
to be finite, in addition to \eqref{eq:L2norm}, we obtain 
\[
\mathcal{G}_{s}:=\left\{ F=\sum_{n=0}^{\infty}\left\langle f^{(n)},:\cdot^{\otimes n}:\right\rangle \in L^{2}(\mu)\mid f^{(n)}\in(L_{1,\mathbb{C}}^{2})^{\hat{\otimes}n},\|F\|_{\mathcal{G}_{s}}<\infty\right\} .
\]

Thus, $\left(\mathcal{G}_{s}\right)_{s\in[0,\infty)}$ is a nested
family of Hilbert subspaces of $L^{2}(\mu)$ with $\mathcal{G}_{0}=L^{2}(\mu)$.
We write $\mathcal{G}:=\bigcap_{s\in[0,\infty)} \mathcal{G}_{s}$
for the projective limit of this family. This yields the topological
dual space $\mathcal{G}'=\bigcup_{s\in [0,\infty)}\mathcal{G}_{-s}$,
where $\mathcal{G}_{-s}:=\mathcal{G}'_{s}$. The elements of $\mathcal{G}'$ are referred to as regular generalized functions because the partial sums within their chaos decomposition are square integrable functions.

Note that the construction of $\mathcal{G}$ is almost identical to
 that of $(S_1)$. The difference lies, in addition to the slightly weaker summability assumption, in the fact that we do not require the kernels $f^{(n)}$ to possess additional regularity. Hence, we have the
following refinement of the Gelfand triple \eqref{eq:Gelfand-triple}
with $d=1$
\begin{equation}
(S_{1})\subset\mathcal{G}\subset\mathcal{G}_{s}\subset L^{2}(\mu)\subset\mathcal{G}_{-s}\subset\mathcal{G}'\subset (S_{1})',\qquad s\in[0,\infty).\label{gelfand3}
\end{equation}

\begin{example}
\label{exa:gelfand3example}Let $t\geq0$ be fixed and $H=\frac{1}{2}$.
Then the Brownian motion $B_{\frac{1}{2}}(t)=\langle\mathbbm{1}_{[0,t)},\cdot\rangle$
and white noise $W_{\frac{1}{2}}(t)=\langle\delta_{t},\cdot\rangle$
described above are such that $B_{\frac{1}{2}}(t)\in\mathcal{G}\setminus(S_{1})$
and $W_{\frac{1}{2}}(t)\in (S_{1})'\setminus\mathcal{G}'$.
\end{example}

In the following, we provide the characterization of regular generalized
functions, see \cite{Grothaus2021} for details. More precisely,
given a $\Phi\in (S_{1})'$, we have a necessary and sufficient condition
to check whether $\Phi\in\mathcal{G}_{s}$ for some $s\in\mathbb{Z}$.
Before we state this characterization theorem (cf.~Theorem~\ref{thm:charact-regular} below), we need some preparations.
Recall the Gaussian measure $\mu_{\sigma^2}$ on $(S_{1}',\mathcal{B})$ with covariance $\sigma^2>0$ is given by its characteristic functional
\begin{equation}\label{charfct}
C_{\sigma^{2}}(\varphi):=\exp\left(-\frac{\sigma^{2}}{2}\|\varphi\|_{2}^{2}\right),\quad\varphi\in S_{1}.
\end{equation}
\begin{defn}
\label{def:complexwn}We identify $S'_{1,\mathbb{C}}$ with $S_{1}'\times S_{1}'$
and define the product measure $\nu:=\mu_{\frac{1}{2}}\otimes\mu_{\frac{1}{2}}$
on $(S'_{1,\mathbb{C}},\mathcal{B}\otimes\mathcal{B})$.
\end{defn}

To characterize elements $\Phi \in \mathcal{G}_{s}$, it suffices to consider their $S$-transform $S\Phi$, a function defined on $S_1$. Since it is also a $U$-functional by Theorem~\ref{thm:charact-thm-Hida},
there is a unique extension of $S\Phi$ to an entire function on $S_{1,\mathbb{C}}$,
see Lemma~10 in \cite{KLPSW96}.

Starting with an element $z\in S'_{1,\mathbb{C}}$, we want to project
it into $S_{1,\mathbb{C}}$ to which we may apply $S\Phi$.
\begin{defn}
Let $n\in\mathbb{N}$ be given and $\left(\varphi_{i}\right)_{i=1}^n\subset S_{1}$ be orthonormal in $L_{1}^{2}$.
We call 
\[
P:S'_{1,\mathbb{C}}\longrightarrow S_{1,\mathbb{C}},\;z\mapsto Pz:=\sum_{i=1}^{n}\langle\varphi_{i},z\rangle\ \varphi_{i},
\]
an \emph{orthogonal projection} from $S'_{1,\mathbb{C}}$ into $S_{1,\mathbb{C}}$.
Here, $\langle\cdot,\cdot\rangle$ denotes the dual pairing on $S_{1,\mathbb{C}}\times S'_{1,\mathbb{C}}$
which is a bilinear extension of $\left(\cdot,\overline{\cdot}\right)_{L_{1,\mathbb{C}}^{2}}$.
The set of all orthogonal projections from $S'_{1,\mathbb{C}}$ into
$S_{1,\mathbb{C}}$ is denoted by $\mathbb{P}$.
\end{defn}

Like orthogonal projections in the classical sense, any $P\in\mathbb{P}$
has the following property 
\begin{equation}
\langle Pz,z^{\prime}\rangle=\left\langle \sum_{i=1}^{n}\langle\varphi_{i},z\rangle\varphi_{i},z^{\prime}\right\rangle =\sum_{i=1}^{n}\langle\varphi_{i},z\rangle\langle\varphi_{i},z^{\prime}\rangle=\langle Pz^{\prime},z\rangle,\label{eq:projprop}
\end{equation}
for every $z,z^{\prime}\in S'_{1,\mathbb{C}}$. We are now able to state
the characterization theorem and refer to Theorem~2.11 in \cite{Grothaus2021} for the proof and more details.
\begin{thm}
\label{thm:charact-regular} Let $\Phi\in(S_1)^{\prime}$ be given.
Then for $s\in\mathbb{R}$ it holds
\[
\Phi\in\mathcal{G}_{s}\Longleftrightarrow\sup\limits_{P\in\mathbb{P}}\int_{S'_{1,\mathbb{C}}}\left|S\Phi\left(2^{\frac{s}{2}}Pz\right)\right|^{2}\mathrm{d}\nu(z)<\infty.
\]
In particular, the spaces $\mathcal{G}$ and $\mathcal{G}'$ are characterized
by 
\end{thm}

\begin{enumerate}
\item[(i)] $\Phi\in\mathcal{G}\Longleftrightarrow\forall\lambda>0,\;\sup_{P\in\mathbb{P}}\int_{S'_{1,\mathbb{C}}}\left|S\Phi\left(\lambda Pz\right)\right|^{2}\mathrm{d}\nu(z)<\infty,$ 
\item[(ii)] $\Phi\in\mathcal{G}'\Longleftrightarrow\exists\varepsilon>0,\;\sup_{P\in\mathbb{P}}\int_{S'_{1,\mathbb{C}}}\left|S\Phi\left(\varepsilon Pz\right)\right|^{2}\mathrm{d}\nu(z)<\infty.$ 
\end{enumerate}

\section{Stochastic Current of Fractional Brownian Motion}
\label{sec:Stochastic-Current}
As motivated in the introduction, using white noise analysis, we investi\-gate for $x \in {\mathbb R}^d$ the following (generalized) function  
\begin{align*}
\xi(x) & :=\int_{0}^{T}\delta(x-B_{H}(t))\,\mathrm{d}B_{H}(t)\\
 & :=\left(\int_{0}^{T}\delta(x-B_H(t))\Diamond W_{H,1}(t)\,\mathrm{d}t,\ldots,\int_{0}^{T}\delta(x-B_H)(t))\Diamond W_{H,d}(t)\,\mathrm{d}t\right)\\
 & =:\big(\xi_{1}(x),\ldots,\xi_{d}(x)\big),
\end{align*}
where $W_H:=(W_{H,1},\dots,W_{H,d})$ is the vector-valued fractional noise defined in ~\eqref{eq:fractional-WN}. The above stochastic integral was 
introduced in \cite[Equation~(26)]{B03} and is called the fractional It\^o integral. If $H=1/2$ and the integrand is an adapted, square integrable function, then this stochastic integral coincides with the classical It\^o integral.

From this point onward, $C$ will denote a positive finite constant whose value may change from line to line.

\begin{thm}
\label{thm:current-Hdistr}For $x\in\mathbb{R}^{d}\backslash\{0\}$, $0<T<\infty$, $H\in (0,1)$, $d\geq1$, and for each $i=1,\ldots,d$, the Bochner integral 
\begin{equation}
\xi_{i}(x)=\int_{0}^{T}\delta(x-B_{H}(t))\Diamond W_{H,i}(t)\,\mathrm{d}t\label{eq:current-bochner}
\end{equation}
is a Hida distribution, and its $S$-transform is given, for any $\varphi\in S_{d}$,
by
\begin{equation}\label{eq:S-transf-current-fBm}
S\left(\xi_{i}(x)\right)(\varphi)=\frac{1}{(2\pi)^{d/2}}\int_{0}^{T}\frac{1}{t^{Hd}}\mathrm{e}^{-\sum_{j=1}^{d}\frac{(x_{j}-\langle\varphi_{j},\eta_{t}\rangle)^{2}}{2t^{2H}}}(M_{+}^{H}\varphi_{i})(t)\,\mathrm{d}t.
\end{equation}
\end{thm}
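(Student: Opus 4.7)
The plan is to apply Corollary~\ref{cor:hida_integral} to the integrand
$$\Psi_t := \delta(x - B_H(t)) \Diamond W_{H,i}(t),\qquad t \in (0,T],$$
viewed as a map from $(0,T]$ into $(S_d)^*$. Both factors are Hida distributions (Example~\ref{exa:S-transform-delta} together with the construction leading to Example~\ref{exa:fwn}), so Definition~\ref{def:Wick_product} guarantees $\Psi_t \in (S_d)^*$ with $S\Psi_t(\varphi) = S\delta(x-B_H(t))(\varphi)\cdot SW_{H,i}(t)(\varphi)$. Inserting the explicit formulas from those examples produces exactly the integrand appearing in~\eqref{eq:S-transf-current-fBm}, and the resulting expression is continuous in $t \in (0,T]$ for every fixed $\varphi \in S_d$, so the measurability hypothesis of Corollary~\ref{cor:hida_integral} is automatic.

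The heart of the argument is the $U$-functional bound. For complex $z=a+ib$ and $v_j := \langle \varphi_j,\eta_t\rangle$, I would first compute
$$-\mathrm{Re}\sum_{j=1}^d (x_j - z v_j)^2 = -|x|^2 + 2a\sum_{j=1}^d x_j v_j - (a^2-b^2)\sum_{j=1}^d v_j^2,$$
discard the harmless $-a^2\sum v_j^2 \le 0$ contribution, absorb the cross term by Young's inequality $2a x_j v_j \le \tfrac12 x_j^2 + 2a^2 v_j^2$, and use $\sum_j v_j^2 \le |\varphi|_0^2 |\eta_t|_0^2 = |\varphi|_0^2 t^{2H}$ (which follows from Cauchy--Schwarz and $|\eta_t|_0^2 = t^{2H}$). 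This gives
$$\bigl|S\delta(x-B_H(t))(z\varphi)\bigr| \le \frac{1}{(2\pi t^{2H})^{d/2}} \exp\!\Bigl(-\tfrac{|x|^2}{4t^{2H}}\Bigr)\, \exp\bigl(|z|^2 |\varphi|_0^2\bigr).$$
Multiplying by $|z(M_+^H\varphi_i)(t)|$, dominating $|(M_+^H\varphi_i)(t)|$ by a continuous seminorm $\|\varphi\|$ of $\varphi$ (using the continuity of $M_+^H$ on $S_d$ into $C([0,T])$ in the regime $H\in(0,1/2]$), and absorbing the linear factor $|z|\|\varphi\| \le e^{|z|^2\|\varphi\|^2}$ (via $1+y \le e^y$) into the Gaussian factor yields a pointwise bound of the form $|S\Psi_t(z\varphi)| \le C_1(t) \exp(C_2(t) |z|^2 \|\varphi\|^2)$ with $C_2(t) \equiv 2$ and
$$C_1(t) = C\, t^{-Hd} \exp\!\bigl(-|x|^2/(4 t^{2H})\bigr).$$

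The main obstacle I expect is the singularity of $t^{-Hd}$ at $t=0$, which is not locally integrable once $Hd \ge 1$. This is precisely where the hypothesis $x \ne 0$ becomes essential: the Gaussian factor $\exp(-|x|^2/(4t^{2H}))$ decays faster than any power of $t$ as $t\downarrow 0$, so a direct optimisation of $t \mapsto t^{-Hd}\exp(-|x|^2/(4t^{2H}))$ shows that $C_1$ is in fact uniformly bounded (and continuous) on $(0,T]$. Hence $C_1 \in L^1([0,T])$ and $C_2 \in L^\infty([0,T])$, so Corollary~\ref{cor:hida_integral} simultaneously identifies $\xi_i(x)$ as an element of $(S_d)^*$ and yields formula~\eqref{eq:S-transf-current-fBm} by interchanging the $S$-transform with the Bochner integral.
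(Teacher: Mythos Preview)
Your argument is correct and follows the same plan as the paper: compute the $S$-transform of the Wick product via Examples~\ref{exa:fwn} and~\ref{exa:S-transform-delta}, then verify the hypotheses of Corollary~\ref{cor:hida_integral} by exhibiting an integrable majorant of the form $C_1(t)=Ct^{-Hd}\exp(-c|x|^2/t^{2H})$, which is bounded on $(0,T]$ precisely because $x\neq 0$. The only noteworthy difference is in the exponential estimate: you bound $\langle\varphi_j,\eta_t\rangle$ by Cauchy--Schwarz and $|\eta_t|_0^2=t^{2H}$, while the paper instead uses $|\langle\varphi_j,\eta_t\rangle|=\bigl|\int_0^t(M_+^H\varphi_j)(s)\,\mathrm{d}s\bigr|\le t\,\|M_+^H\varphi_j\|_\infty$, which produces a factor $t^{1-2H}$ and is the place where the paper invokes $H\le 1/2$; in your version that restriction is only used to control $\|M_+^H\varphi_i\|_\infty$, a bound the paper itself cites (Bender, Theorem~2.3) without constraint on $H$.
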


\begin{proof}
First, we compute the $S$-transform of the integrand $\Phi_t$, $t\in(0,T]$, in (\ref{eq:current-bochner}), that is,
\[
\Phi_t:=\delta(x-B_{H}(t))\Diamond W_{H,i}(t).
\]
It follows from Definition~\ref{def:Wick_product}, Examples~\ref{exa:fwn} and \ref{exa:S-transform-delta} that, for any $\varphi\in S_{d}$, we obtain
\begin{eqnarray}
S\Phi_t(\varphi)  &=&S\big(\delta(x-B_{H}(t))\big)(\varphi)S\big(W_{H,i}(t)\big)(\varphi)\nonumber\\
  &=&\frac{1}{(2\pi t^{2H})^{d/2}}\exp\left(-\frac{1}{2t^{2H}}\sum_{j=1}^{d}(x_{j}-\langle\varphi_{j},\eta_{t}\rangle)^{2}\right)(M_{+}^{H}\varphi_{i})(t)\label{eq:ST-Integ-fBm}.
\end{eqnarray}
It is clear that $(0,T] \ni t \mapsto S\Phi_t(\varphi) \in {\mathbb C}$ is Borel measurable for every $\varphi\in S_{d}$.
Furthermore, for any $z\in\mathbb{C}$ and all $\varphi\in S_{d}$, we estimate 
$|S\Phi_t(z\varphi)|$ as follows
\begin{align*}
& |S\Phi_t(z\varphi)| \\
&\le \frac{1}{(2\pi t^{2H})^{d/2}} \exp\left(-\frac{1}{2t^{2H}}|x|_{\mathbb{R}^d}^2\right)\exp\left(\frac{1}{2t^{2H}}\sum_{j=1}^d|x_j||z||\langle\varphi_{j},\eta_{t}\rangle|\right)\\
&\times \exp\left(\frac{1}{2t^{2H}}|z|^2\sum_{j=1}^d|\langle\varphi_{j},\eta_{t}\rangle|^2\right)|z|\left| M_+^H\varphi_i(t)\right|\\
&\le \frac{1}{(2\pi t^{2H})^{d/2}} \exp\left(-\frac{1}{2t^{2H}}|x|_{\mathbb{R}^d}^2\right)\exp\left(\frac{1}{2t^{2H}}\sum_{j=1}^d\left( \frac{1}{4}|x_j|^2+|z|^2|\langle\varphi_{j},\eta_{t}\rangle|^2\right)\right)\\
&\times \exp\left(\frac{1}{2t^{2H}}|z|^2t^{2H}\|\varphi\|_{2}^2\right)|z|\| M_+^H\varphi\|_{\infty}\\
 & \le\frac{1}{(2\pi t^{2H})^{d/2}}\exp\left(-\frac{1}{4t^{2H}}|x|_{\mathbb{R}^d}^2\right) \exp\left( C|z|^2\|\varphi\|_2^2\right) \exp\left( |z|^2\| M_+^H\varphi\|_{\infty}^2\right)\\
 & \le\frac{1}{(2\pi t^{2H})^{d/2}}\exp\left(-\frac{1}{4t^{2H}}|x|_{\mathbb{R}^d}^2\right)\exp\left( C|z|^2\|\varphi\|^2\right)
\end{align*}
for some continuous norm $\|\cdot\|$ on $S_d$. The third inequality is a consequence of
\[
|\langle\varphi,\eta_{t}\rangle|^{2}\le\|\varphi\|_{2}^{2}\|M_{-}^{H}\mathbbm{1}_{[0,t)}\|_2^2=t^{2H}\|\varphi\|_{2}^{2}.
\] 
 The last inequality follows from $\|M_+^H\varphi\|_\infty^2\le\|\varphi\|^2$, which is due to Theo\-rem~2.3 in \cite{B03}. By using a gamma function argument, the function $(0,T] \ni t \mapsto \frac{1}{\left(2\pi t^{2H}\right)^{d/2}}\exp\left(-\frac{1}{4t^{2H}}|x|^{2}_{\mathbb{R}^d}\right)$ is integrable with respect to the Lebesgue measure on $[0,T]$.

As the second factor $\exp\big(C|z|^2\|\varphi\|_2^{2}\big)$
 is independent of $t\in[0,T]$, 
this shows that the conditions of Corollary~\ref{cor:hida_integral} are satisfied and
\[
\int_{0}^{T}\delta(x-B_{H}(t))\Diamond W_{H,i}(t)\,\mathrm{d}t\in(S_{d})'.\qedhere
\]
\end{proof}

Analyzing the proof of Theorem~\ref{thm:current-Hdistr}, we see that it is also possible to include $x=0\in\mathbb{R}^d$.

\begin{cor}\label{rem:main-result}
\begin{enumerate}
    \item For $d=1$ and for all $H\in(0,1)$ we have $\xi(0)\in (S_1)'$.
    \item For $d\geq2$ and $H\in(0,1/d)$ we have $\xi(0)\in(S_d)'$. 
\end{enumerate}
\end{cor}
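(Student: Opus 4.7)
The plan is to re-run the proof of Theorem~\ref{thm:current-Hdistr} verbatim after setting $x=0$, and keep track of where that proof actually used $x\neq 0$. Looking at the chain of estimates there, the nonzero $x$ entered through the factor $\exp\!\bigl(-|x|^{2}/(2t^{2H})\bigr)$, which killed the $t^{-Hd}$ singularity at $t=0$, and through the cross term $t^{1-2H}|x||z|\,\|M_{+}^{H}\varphi\|_{\infty}$, which is what forced $H\le 1/2$. Both of these disappear when $x=0$, so the restriction $H\le 1/2$ should drop, but we will pay for the loss of the Gaussian damping in $t$, which is precisely what produces the condition $Hd<1$.

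Concretely, with $x=0$ the formula \eqref{eq:S-transf-current-fBm} becomes
\[
S\Phi_{t}(z\varphi)=\frac{1}{(2\pi t^{2H})^{d/2}}\exp\!\left(-\frac{z^{2}}{2t^{2H}}\sum_{j=1}^{d}\langle\varphi_{j},\eta_{t}\rangle^{2}\right)z\,(M_{+}^{H}\varphi_{i})(t).
\]
Using $-\mathrm{Re}(z^{2})\le |z|^{2}$ on the exponent, then the duality identity $\langle\varphi_{j},\eta_{t}\rangle=\int_{0}^{t}(M_{+}^{H}\varphi_{j})(s)\,\mathrm{d}s$, which gives $|\langle\varphi_{j},\eta_{t}\rangle|\le t\,\|M_{+}^{H}\varphi_{j}\|_{\infty}$, I would estimate
\[
\frac{1}{t^{2H}}\sum_{j=1}^{d}\langle\varphi_{j},\eta_{t}\rangle^{2}\le d\,t^{2-2H}\,\|M_{+}^{H}\varphi\|_{\infty}^{2}\le d\,T^{2-2H}\,\|\varphi\|^{2},
\]
where in the last step I invoke Theorem~2.3 of \cite{Bender03} to pass from $\|M_{+}^{H}\varphi\|_{\infty}$ to a continuous norm $\|\cdot\|$ on $S_{d}$. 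Note that $t^{2-2H}$ is bounded on $[0,T]$ for every $H\in(0,1)$, so there is no upper restriction on $H$ coming from the exponent. Absorbing the remaining linear factor $|z|\|M_{+}^{H}\varphi_{i}\|_{\infty}$ into the exponential via the elementary bound $a b\le e^{a^{2}b^{2}/2+1/2}$, I arrive at
\[
|S\Phi_{t}(z\varphi)|\le \frac{C}{t^{Hd}}\,\exp\!\bigl(K|z|^{2}\|\varphi\|^{2}\bigr)
\]
with $C,K$ depending only on $T,H,d$.

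Now the hypotheses of Corollary~\ref{cor:hida_integral} demand $C_{1}(t)=Ct^{-Hd}\in L^{1}([0,T],\mathrm{d}t)$ and $C_{2}(t)=K\in L^{\infty}([0,T])$. The second is trivial; the first holds if and only if $Hd<1$. This yields both cases of the corollary: for $d=1$ the inequality $H<1$ is automatic, and for $d\ge 2$ it becomes $H<1/d$. The measurability of $t\mapsto S\Phi_{t}(\varphi)$ is, as in Theorem~\ref{thm:current-Hdistr}, immediate from the explicit formula. The only genuinely delicate point is the absorption step that turns the two separate bounds $t^{-Hd}$ and $\|M_{+}^{H}\varphi\|_{\infty}^{2}t^{2-2H}$ into the canonical $U$-functional form; once this is done, the rest is routine, and the proof naturally stops short of $Hd\ge 1$, which is exactly the regime requiring the truncation of Theorem~\ref{thm:current-Hdistr-zero}.
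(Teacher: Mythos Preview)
Your proposal is correct and follows precisely the route the paper indicates: the paper offers no separate proof of this corollary beyond the single sentence ``Analyzing the proof of Theorem~\ref{thm:current-Hdistr} we see that it is also possible to include $x=0\in\mathbb{R}^d$,'' and you have carried out exactly that analysis. Your observation that the cross term $t^{1-2H}|x|\,|z|\,\|M_+^H\varphi\|_\infty$ vanishes at $x=0$, thereby removing the restriction $H\le 1/2$, and that the loss of the Gaussian factor $\exp(-|x|^2/(2t^{2H}))$ forces $t^{-Hd}\in L^1([0,T])$, i.e.\ $Hd<1$, is the whole content of the corollary.

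One small remark: for bounding the quadratic term $\frac{1}{2t^{2H}}\sum_j\langle\varphi_j,\eta_t\rangle^2$ you use the duality identity $\langle\varphi_j,\eta_t\rangle=\int_0^t(M_+^H\varphi_j)(s)\,\mathrm{d}s$ to get the factor $t^{2-2H}$, whereas the paper's proof of Theorem~\ref{thm:current-Hdistr} uses Cauchy--Schwarz with $|\eta_t|_0^2=t^{2H}$ to get the $t$-independent bound $\frac{1}{2}|z|^2|\varphi|_0^2$ directly. Either estimate works here and leads to the same conclusion.
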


\begin{rem}
To cover the case $H\in[1/d,1)$, we may truncate $\xi(0)$ and use techniques similar to those in the analysis of local times for fBm; see, for example, \cite{drumond-oliveira-silva08, OSS09} and other sources cited therein. 
\end{rem}

\section{Regularity of Stochastic Currents of Fractional Brownian Motion\label{sec:Regularity-SC-fBm}}
Consider the $S$-transform of the stochastic current of 1-dimensional fBm with Hurst parameter $H\in(0,1)$:
\begin{equation}
S\left(\xi(x)\right)(\varphi)=\frac{1}{(2\pi)^{1/2}}\int_{0}^{T}\frac{1}{t^{H}}\mathrm{e}^{-\frac{(x-\langle\varphi,\eta_{t}\rangle)^{2}}{2t^{2H}}}(M_{+}^{H}\varphi)(t)\,\mathrm{d}t,\qquad\varphi\in S_{1}.\label{eq:1d-S-transf-current-fBm}
\end{equation}
To show the integrability condition of Theorem \ref{thm:charact-regular}
of the $S$-transform \eqref{eq:1d-S-transf-current-fBm}, more suitable representation of $S(\xi(x))$ is needed. Point evaluations are too singular. The test function should appear only in integrated form. The new idea for obtaining such a representation can be found in the following key lemma:

\begin{lem}\label{lem:newStranslem}
For every $x\in\mathbb{R}$, $H\in(0,1)$, the $S$-transform
of the stochastic current $\xi(x)$ of fBm has a representation
given by
\begin{equation}
\begin{aligned}
S\left(\xi(x)\right)(\varphi) & =\frac{H}{\sqrt{2\pi}}\int_{0}^{T}\int_{0}^{1}\frac{1}{t^{H+1}}\mathrm{e}^{-\frac{(x-\tau\langle\varphi,\eta_{t}\rangle)^{2}}{2t^{2H}}}\langle\varphi,\eta_{t}\rangle\,\mathrm{d}\tau\,\mathrm{d}t\\
 & +\frac{1}{\sqrt{2\pi}T^{H}}\int_{0}^{1}\mathrm{e}^{-\frac{(x-\tau\langle\varphi,\eta_{T}\rangle)^{2}}{2T^{2H}}}\langle\varphi,\eta_{T}\rangle\,\mathrm{d}\tau\\
 & -\frac{H}{\sqrt{2\pi}}\int_{0}^{T}\int_{0}^{1}\frac{(x-\tau\langle\varphi,\eta_{t}\rangle)^{2}}{t^{3H+1}}\mathrm{e}^{-\frac{(x-\tau\langle\varphi,\eta_{t}\rangle)^{2}}{2t^{2H}}}\langle\varphi,\eta_{t}\rangle\,\mathrm{d}\tau\,\mathrm{d}t,
 \end{aligned}
 \label{eq:newStrans}
\end{equation}
for any $\varphi\in S_{1}$.
\end{lem}
\begin{proof} \textit{Step 1}:
For any test function $\varphi\in S_{1}$ and every $x\in\mathbb{R}$ we define the maps
\begin{align*}
f & :(0,\infty)\times\mathbb{R}\longrightarrow\mathbb{R},\;(t,u)\mapsto f(t,u):=\int_{0}^{u}\exp\left(-\frac{(x-v)^{2}}{2t^{2H}}\right)\mathrm{d}v,\\
F: & (0,\infty)\longrightarrow\mathbb{R},\;t\mapsto F(t):=f(t,\psi(t)),\quad\psi(t):=\langle\varphi,\eta_{t}\rangle.
\end{align*}
By Corollary 2.8 in \cite{B03}, $\psi:(0,\infty)\longrightarrow\mathbb{R}$ is differentiable, and its derivative $t\mapsto \psi'(t)=(M_+^H\varphi)(t)$ is a continuous function. It follows that $F$ is differentiable, and its derivative is given
by
\begin{align}\label{eq:Fprime}
&F'(t)=\frac{\partial f}{\partial t}(t,\psi(t))+\frac{\partial f}{\partial u}(t,\psi(t))\frac{\partial\psi}{\partial t}(t)  \\
& =H\int_{0}^{\psi(t)}\frac{(x-v)^{2}}{t^{2H+1}}\exp\left(-\frac{(x-v)^{2}}{2t^{2H}}\right)\mathrm{d}v +\exp\left(-\frac{(x-\psi(t))^{2}}{2t^{2H}}\right)(M_{+}^{H}\varphi)(t).\nonumber
\end{align}
For every $t>0$, the change of variable $\psi(t)\tau=v$ in the integral
above yields
\begin{multline}\label{eq:integrand-3}
\int_{0}^{\psi(t)}\frac{(x-v)^{2}}{t^{2H+1}}\exp\left(-\frac{(x-v)^{2}}{2t^{2H}}\right)\mathrm{d}v \\
=\int_{0}^{1}\frac{(x-\tau\psi(t))^{2}}{t^{2H+1}}\exp\left(-\frac{(x-\tau\psi(t))^{2}}{2t^{2H}}\right)\psi(t)\,\mathrm{d}\tau.
\end{multline}
It follows from \eqref{eq:Fprime} and \eqref{eq:integrand-3} that
\begin{equation}\label{eq:Fprime-divided}
\begin{aligned}
\frac{F'(t)}{\sqrt{2\pi}t^{H}}  =\frac{H}{\sqrt{2\pi}}\frac{1}{t^{H}}\int_{0}^{1}\frac{(x-\tau\psi(t))^{2}}{t^{2H+1}}\exp\left(-\frac{(x-\tau\psi(t))^{2}}{2t^{2H}}\right)\psi(t)\mathrm{d}\tau\\
  +\frac{1}{\sqrt{2\pi}t^{H}}\exp\left(-\frac{(x-\psi(t))^{2}}{2t^{2H}}\right)(M_{+}^{H}\varphi)(t).
\end{aligned}
\end{equation}
\textit{Step 2}: The function
\begin{equation}\label{eq:step2}
(0,T]\ni t\mapsto \int_{0}^{1}\frac{(x-\tau\psi(t))^{2}}{t^{2H+1}}\exp\left(-\frac{(x-\tau\psi(t))^{2}}{2t^{2H}}\right)\psi(t)\,\mathrm{d}\tau
\end{equation} 
is $\mathrm{d}t$-integrable on $[0,T]$. The details on the integrability of the function in \eqref{eq:step2} are given in Lemma~\ref{lem:proof-lemma-step2}, below. 
If we integrate both sides of \eqref{eq:Fprime-divided} from $t=0$ to $t=T$ we obtain
\begin{multline}
S(\xi(x))(\varphi)=\frac{1}{\sqrt{2\pi}}\int_{0}^{T}\frac{F'(t)}{t^{H}}\,\mathrm{d}t \\ -\frac{H}{\sqrt{2\pi}}\int_{0}^{T}\int_{0}^{1}\frac{(x-\tau\psi(t))^{2}}{t^{3H+1}}\exp\left(-\frac{(x-\tau\psi(t))^{2}}{2t^{2H}}\right)\psi(t)\,\mathrm{d}\tau\,\mathrm{d}t.\label{eq:S-transform-rewritten}
\end{multline}
\textit{Step 3}: An integration by parts shows that
\begin{equation}\label{eq:IbP}
\begin{aligned}
\int_{0}^{T}\frac{F'(t)}{t^{H}}\,\mathrm{d}t & =\frac{1}{T^{H}}\int_{0}^{1}\exp\left(-\frac{(x-\tau\psi(T))^{2}}{2T^{2H}}\right)\psi(T)\,\mathrm{d}\tau\\
 & +H\int_{0}^{T}\frac{1}{t^{H+1}}\int_{0}^{1}\exp\left(-\frac{(x-\tau\psi(t))^{2}}{2t^{2H}}\right)\psi(t)\,\mathrm{d}\tau\,\mathrm{d}t.
\end{aligned}
\end{equation}
We refer to Lemma~\ref{lem:IbP}, below, for details on the integration by parts in Equation~\eqref{eq:IbP}. The statement of Lemma \ref{lem:newStranslem} now follows from \eqref{eq:S-transform-rewritten} combined with \textit{Step 3}.
\end{proof}
\begin{prop}\label{prop:S-Transf-Lasterm-U-funct}
For any $x\in\mathbb{R}$ and $H\in(0,1)$, the expression of the $S$-transform of $\xi(x)$
given in \eqref{eq:newStrans} is a $U$-functional.
\end{prop}

\begin{proof}The proof is given in Appendix~\ref{app:prop}.
\end{proof}

As a consequence of the characterization theorem, we now immediately obtain the following corollary.

\begin{cor}
\label{cor:extension}For any $x\in\mathbb{R}$ and $H\in(0,1)$, the expression of the $S$-transform of $\xi(x)$
given in \eqref{eq:newStrans} extends uniquely to an entire function
on $S_{1,\mathbb{C}}$; see \cite{KLPSW96}.
\end{cor}
We are now ready to present the regularity result.
\begin{thm}
\label{thm:main-theorem} Let $x\neq0$ and $H \in (0,1)$ be given. Then the stochastic current $\xi(x)$ of $1$-dimensional fBm is an element of $\mathcal{G}_{-s}$ for all $s>0$.
\end{thm}

\begin{proof}
We have to show the condition of Theorem~\ref{thm:charact-regular},
that is, 
\begin{equation}
\sup_{P\in\mathbb{P}}\int_{S'_{1,\mathbb{C}}}|S(\xi(x))(\varepsilon Pz)|^2\,\mathrm{d}\nu(z)<\infty,\label{eq:goal}
\end{equation}
for all $0<\varepsilon<1$. To this end, we use the representation
of $S(\xi(x))$ on $S_{1,\mathbb{C}}$ provided by Corollary~\ref{cor:extension}
and show the $\nu$-integrability.

Let $0<\varepsilon<1$ be fixed and $P\in\mathbb{P}$ be given. It follows
from \eqref{eq:newStrans} and the property \eqref{eq:projprop} that for every
$z\in S'_{1,\mathbb{C}}$ 
\begin{eqnarray}\label{eq:S-transform-1}
 &&S(\xi(x))(\varepsilon Pz)\nonumber \\
  &=&\frac{H}{(2\pi)^{1/2}}\int_{0}^{T}\int_{0}^{1}\frac{1}{t^{H+1}}\exp\left(-\frac{(x-\tau\varepsilon\langle P\eta_{t},z\rangle)^{2}}{2t^{2H}}\right)\varepsilon\langle P\eta_{t},z\rangle\,\mathrm{d}\tau\,\mathrm{d}t\nonumber\\
  &+&\frac{1}{\sqrt{2\pi}T^{H}}\int_{0}^{1}\exp\left(-\frac{(x-\tau\varepsilon\langle P\eta_{T},z\rangle)^{2}}{2T^{2H}}\right)\varepsilon\langle P\eta_{T},z\rangle\,\mathrm{d}\tau \\
  &-&\frac{H}{\sqrt{2\pi}}\!\int_{0}^{T}\!\!\!\int_{0}^{1}\frac{(x-\tau\varepsilon\langle P\eta_{t},z\rangle)^{2}}{t^{3H+1}}\exp\left(\!-\frac{(x-\tau\varepsilon\langle P\eta_{t},z\rangle)^{2}}{2t^{2H}}\right)\!\varepsilon\langle P\eta_{t},z\rangle\,\mathrm{d}\tau\,\mathrm{d}t.\nonumber
\end{eqnarray}
We show the $\nu$-integrability of the last summand in \eqref{eq:S-transform-1}.
All the other summands are particular cases, and the same arguments
apply. Define 
\begin{equation}
G(x,\varphi,t,\tau):=\left(x-\tau\langle\varphi,\eta_{t}\rangle\right)^{2}\exp\left(-\frac{\left(x-\tau\langle\varphi,\eta_{t}\rangle\right)^{2}}{2t^{2H}}\right)\langle\varphi,\eta_{t}\rangle,\label{eq:auxiliar-function}
\end{equation}
where $\varphi\in S_{1,\mathbb{C}}$, $t\in(0,T]$, and $\tau\in[0,1]$.
The following estimate follows from Jensen's inequality 
\begin{eqnarray}
 &  & \int_{S'_{1,\mathbb{C}}}\left|\int_{0}^{T}\int_{0}^{1}\frac{1}{t^{3H+1}}G(x,\varepsilon Pz,t,\tau)\,\mathrm{d}\tau\,\mathrm{d}t\right|^{2}\mathrm{d}\nu(z)\nonumber \\
 & \leq & T\int_{S'_{1,\mathbb{C}}}\int_{0}^{T}\left|\int_{0}^{1}\frac{1}{t^{3H+1}}G(x,\varepsilon Pz,t,\tau)\ d\tau\right|^{2}\mathrm{d}t\,\mathrm{d}\nu(z)\label{eq:firstestimate}\\
 & = & T \int_{S'_{1,\mathbb{C}}}\int_{0}^{T}\int_{0}^{1}\int_{0}^{1}\frac{1}{t^{6H+2}}G(x,\varepsilon Pz,t,\tau_{1})\overline{G(x,\varepsilon Pz,t,\tau_{2})}\,\mathrm{d}\tau_{1}\,\mathrm{d}\tau_{2}\,\mathrm{d}t\,\mathrm{d}\nu(z).\nonumber 
\end{eqnarray}
From the definition of the function $G$, it follows that 
\[
S'_{1,\mathbb{C}}\times(0,T]\times[0,1]\ni(z,t,\tau)\mapsto G(x,\varepsilon Pz,t,\tau)\in\mathbb{C}
\]
is measurable. Apply Fubini's theorem, and we have to evaluate the $\nu$-integral
\begin{equation}
\int_{S'_{1,\mathbb{C}}}G(x,\varepsilon Pz,t,\tau_{1})\overline{G(x,\varepsilon Pz,t,\tau_{2})}\, \mathrm{d}\nu(z)\label{eq:nu-integral}
\end{equation}
for fixed $t\in(0,T]$, $\tau_{1},\tau_{2}\in[0,1]$. Our aim is
to apply Lemma~\ref{lem:momgenfct} to compute the integral in~\eqref{eq:nu-integral}.
Hence, we split the integrand into an exponential factor and a polynomial
factor.

Let $t\in(0,T]$ be such that $P\eta_{t}\neq0$, otherwise the integral
in \eqref{eq:nu-integral} is zero. For every $z\in S'_{1,\mathbb{C}}$ the exponential part of the integrand $G(x,\varepsilon Pz,t,\tau_{1})\cdot\overline{G(x,\varepsilon Pz,t,\tau_{2})}$
is given by
\begin{eqnarray*}
 &  & \exp\left(-\frac{\left(x-\tau_{1}\varepsilon\langle P\eta_{t},z\rangle\right)^{2}}{2t^{2H}}\right)\exp\left(-\frac{\left(x-\tau_{2}\varepsilon\langle P\eta_{t},\overline{z}\rangle\right)^{2}}{2t^{2H}}\right)\\
 & = & \exp\left(-\frac{x^{2}}{t^{2H}}\right)\exp\left(\frac{x(\tau_{1}+\tau_{2})\varepsilon\langle P\eta_{t},\mathfrak{R}z\rangle}{t^{2H}}+\frac{ix(\tau_{1}-\tau_{2})\varepsilon\langle P\eta_{t},\mathfrak{I}z\rangle}{t^{2H}}\right)\\
 &  & \times\exp\left(-\frac{\tau_{1}^{2}\varepsilon^{2}\langle P\eta_{t},z\rangle^{2}}{2t^{2H}}-\frac{\tau_{2}^{2}\varepsilon^{2}\langle P\eta_{t},\overline{z}\rangle^{2}}{2t^{2H}}\right).
\end{eqnarray*}
The polynomial part of $G(x,\varepsilon Pz,t,\tau_{1})\overline{G(x,\varepsilon Pz,t,\tau_{2})}$
gives
\begin{align*}
 & \big(x-\tau_{1}\varepsilon\langle P\eta_{t},z\rangle\big)^{2}\varepsilon\langle P\eta_{t},z\rangle\big(x-\tau_{2}\varepsilon\langle P\eta_{t},e\overline{z}\rangle\big)^{2}\varepsilon\langle P\eta_{t},e\overline{z}\rangle\\
= & \varepsilon^{2}\big[\langle P\eta_{t},\mathfrak{R}z\rangle^{2}-(i\langle P\eta_{t},\mathfrak{I}z\rangle)^{2}\big]\big[x-\tau_{1}\varepsilon(\langle P\eta_{t},\mathfrak{R}z\rangle+i\langle P\eta_{t},\mathfrak{I}z\rangle)\big]^{2}\\
 & \times\big[x-\tau_{2}\varepsilon(\langle P\eta_{t},\mathfrak{R}z\rangle-i\langle P\eta_{t},\mathfrak{I}z\rangle)\big]^{2}.
\end{align*}
Expanding the right side of the above equality in a finite sum of terms, such as 
\[
Cx^{k_{1}}\varepsilon^{k_{2}}\tau_{1}^{k_{3}}\tau_{2}^{k_{4}}\langle P\eta_{t},\mathfrak{R}z\rangle^{n}(i\langle P\eta_{t},\mathfrak{I}z\rangle)^{m}
\]
with $C\in\mathbb{Z}$ and $0\leq k_{j},n,m\leq6$, $j=1,2,3,4$.
Therefore, the integral in \eqref{eq:nu-integral} simplifies to a
finite sum of integrals of the form
\begin{equation}
\begin{aligned}
 & Cx^{k_{1}}\varepsilon^{k_{2}}\tau_{1}^{k_{3}}\tau_{2}^{k_{4}}\exp\left(-\frac{x^{2}}{t^{2H}}\right)\\
&\times\int_{S'_{1,\mathbb{C}}}\exp\left(\frac{x(\tau_{1}+\tau_{2})\varepsilon\langle P\eta_{t},\mathfrak{R}z\rangle}{t^{2H}}+\frac{ix(\tau_{1}-\tau_{2})\varepsilon\langle P\eta_{t},\mathfrak{I}z\rangle}{t^{2H}}\right) \\
 & \times\exp\left(-\frac{\tau_{1}^{2}\varepsilon^{2}\langle P\eta_{t},z\rangle^{2}}{2t^{2H}}-\frac{\tau_{2}^{2}\varepsilon^{2}\langle P\eta_{t},\overline{z}\rangle^{2}}{2t^{2H}}\right)\langle P\eta_{t},\mathfrak{R}z\rangle^{n}(i\langle P\eta_{t},\mathfrak{I}z\rangle)^{m}\,\mathrm{d}\nu(z).
\end{aligned}
\label{eq:nu-integral-1}
\end{equation}
Note that the image of the measure $\nu$ under the map
\[
(S'_{1,\mathbb{C}},\nu)\longrightarrow\mathbb{R}^{2},\;z\mapsto(\langle\varphi,\mathfrak{R}z\rangle,\langle\phi,\mathfrak{I}z\rangle),
\]
for $L^{2}(\mathbb{R})$-normalized test functions $\varphi,\phi\in S_{1}$, is
the centered Gaussian measure with covariance matrix $\frac{1}{2}\mathrm{Id}_{2}$.
We introduce the following notation.
\begin{equation}
\begin{aligned}c_{P}:=c_{P}(t) & :=\frac{\|P\eta_{t}\|_{2}}{t^{H}},\\
\alpha:=\alpha_{P,\varepsilon}(t,\tau_{1},\tau_{2}) & :=\frac{x(\tau_{1}+\tau_{2})\varepsilon c_{P}}{t^{H}},\\
\beta:=\beta_{P,\varepsilon}(t,\tau_{1},\tau_{2}) & :=\frac{x(\tau_{1}-\tau_{2})\varepsilon c_{P}}{t^{H}},\\
\gamma:=\gamma_{P,\varepsilon}(t,\tau_{1},\tau_{2}) & :=-(\tau_{1}^{2}-\tau_{2}^{2})\varepsilon^{2}c_{P}^{2},\\
\sigma:=\sigma_{P,\varepsilon}(t,\tau_{1},\tau_{2}) & :=\frac{2}{2+(\tau_{1}^{2}+\tau_{2}^{2})\varepsilon^{2}c_{P}^{2}},\\
\sigma^{\prime}:=\sigma_{P,\varepsilon}^{\prime}(t,\tau_{1},\tau_{2}) & :=\frac{2}{2-(\tau_{1}^{2}+\tau_{2}^{2})\varepsilon^{2}c_{P}^{2}}.
\end{aligned}
\label{eq:constants}
\end{equation}
Evaluating the integral \eqref{eq:nu-integral-1} using the above functions 
yields
\begin{align*}
 & \frac{1}{\pi}Cx^{k_{1}}\varepsilon^{k_{2}}\tau_{1}^{k_{3}}\tau_{2}^{k_{4}}\exp\left(-\frac{x^{2}}{t^{2H}}\right)\|P\eta_{t}\|_{2}^{n+m}\int_{\mathbb{R}^{2}}\exp\big(\alpha u+\mathrm{i}\beta v+\mathrm{i}\gamma uv\big)\\
 & \times\exp\left(-\frac{(\tau_{1}^{2}+\tau_{2}^{2})\varepsilon^{2}c_{P}^{2}(u^{2}-v^{2})}{2}-(u^{2}+v^{2})\right)u^{n}(\mathrm{i}v)^{m}\,\mathrm{d}u\,\mathrm{d}v\\
= & \frac{1}{\pi}Cx^{k_{1}}\varepsilon^{k_{2}}\tau_{1}^{k_{3}}\tau_{2}^{k_{4}}\exp\left(-\frac{x^{2}}{t^{2H}}\right)\|P\eta_{t}\|_{2}^{n+m}\\
 & \times\int_{\mathbb{R}^{2}}\exp\left(-\frac{1}{\sigma}u^{2}+\alpha u\right)\exp\left(-\frac{1}{\sigma^{\prime}}v^{2}+i\beta v\right)\exp(\mathrm{i}\gamma uv)u^{n}(\mathrm{i}v)^{m}\,\mathrm{d}u\,\mathrm{d}v\\
= & \frac{1}{\pi}Cx^{k_{1}}\varepsilon^{k_{2}}\tau_{1}^{k_{3}}\tau_{2}^{k_{4}}\exp\left(-\frac{x^{2}}{t^{2H}}\right)\|P\eta_{t}\|_{2}^{n+m}\int_{\mathbb{R}^{2}}h(\alpha,\beta,u,v)u^{n}(\mathrm{i}v)^{m}\,\mathrm{d}u\,\mathrm{d}v,
\end{align*}
where $h(\alpha,\beta,u,v)$ is given by
\[ h(\alpha,\beta,u,v):=\exp\left(-\frac{1}{\sigma}u^{2}+\alpha u\right)\exp\left(-\frac{1}{\sigma^{\prime}}v^{2}+i\beta v\right)\exp(\mathrm{i}\gamma uv). \]
Define $I_h$ by
\[ I_{h}:\mathbb{R}^{2}\longrightarrow\mathbb{R},\;(\alpha,\beta)\mapsto\int_{\mathbb{R}^{2}}h(\alpha,\beta,u,v)\,\mathrm{d}u\,\mathrm{d}v .\]
It follows from Lemma~\ref{lem:momgenfct} and Remark~\ref{rem:explicitder} that
\[
\int_{\mathbb{R}^{2}}h(\alpha,\beta,u,v)u^{n}(\mathrm{i}v)^{m}\,\mathrm{d}u\,\mathrm{d}v=\frac{\partial^{n+m}I_{h}}{\partial\alpha^{n}\partial\beta^{m}}(\alpha,\beta)=I_{h}(\alpha,\beta)p_{n+m}(\alpha,\beta).
\]
 Here, $p_{n+m}(\alpha,\beta)$ is a finite sum of terms of the form
\begin{align*}
Dc^{l_{1}}{c^{\prime}}^{l_{2}}(\sigma^{\prime}\gamma)^{l_{3}}\alpha^{l_{4}}\beta^{l_{5}}
\end{align*}
with $D\in\mathbb{Q}$ and $0\leq l_{1},\ldots,l_{5}\leq n+m$. Here, $c$ and $c'$ are defined in \eqref{eq:constants-c}. Recall
that $h,I_{h},c$ and $c^{\prime}$ depend on $\sigma,\sigma^{\prime},\gamma$,
which are functions of $t,\tau_{1},\tau_{2}$. Putting all together,
the integral in \eqref{eq:nu-integral} is given as a finite sum of terms
of the form
\begin{equation}
\frac{1}{\pi}CDx^{k_{1}}\varepsilon^{k_{2}}\tau_{1}^{k_{3}}\tau_{2}^{k_{4}}\exp\left(-\frac{x^{2}}{t^{2H}}\right)\|P\eta_{t}\|_{2}^{n+m}c^{l_{1}}{c^{\prime}}^{l_{2}}(\sigma^{\prime}\gamma)^{l_{3}}\alpha^{l_{4}}\beta^{l_{5}}I_{h}(\alpha,\beta).\label{eq:integral2}
\end{equation}
It remains to show that \eqref{eq:integral2} multiplied by $\frac{1}{t^{6H+2}}$
is integrable with respect to $\mathrm{d}t\otimes\mathrm{d}\tau_{1}\otimes\mathrm{d}\tau_{2}$
on $(0,T]\times[0,1]^{2}$ for any $P\in\mathbb{P}$, and that the
supremum of the resulting set of integrals over all $P\in\mathbb{P}$
is finite. 

First, we note that $I_{h}(\alpha,\beta)$ can be estimated by
\[
I_{h}(\alpha,\beta)\le\frac{\pi}{\sqrt{1-\varepsilon^{2}}}\exp\left(\varepsilon^{2}\frac{x^{2}}{t^{2H}}\right),
\]
see Lemma~\ref{rem:Ih-bound}. Hence, using \eqref{eq:Ih-bound} and bounding the functions in \eqref{eq:constants}, the expression in \eqref{eq:integral2} multiplied by $\frac{1}{t^{6H+2}}$ 
is bounded by
\[
\frac{2^{l_{4}}|CD|T^{H(n+m)}}{4^{l_{1}}\left( 1-\varepsilon^{2}\right)^{l_3+1/2}}|x|^{k_{1}+l_{4}+l_{5}}\left(\frac{1}{4(1-\varepsilon^{2})}+\frac{1}{16(1-\varepsilon^{2})^{2}}\right)^{l_{2}}\frac{\mathrm{e}^{(\varepsilon^{2}-1)\frac{x^{2}}{t^{2H}}}}{t^{6H+2+H(l_{4}+l_{5})}}.
\]
This bound is independent of $\tau_{1},\tau_{2}$ and $P$. Furthermore,
since $\varepsilon^{2}-1<0$, the function 
\[
(0,T]\ni t\mapsto\frac{\mathrm{e}^{(\varepsilon^{2}-1)\frac{x^{2}}{t^{2H}}}}{t^{6H+2+H(l_{4}+l_{5})}}
\]
is $\mathrm{d}t$-integrable. The result of the theorem follows.
\end{proof}

\subsection*{Acknowledgments}

This work was partially supported by a grant from the Niels Henrik
Abel Board and the Center for Research in Mathematics and Applications
(CIMA) related to Statistics, Stochastic Processes, and Applications
(SSPA) group, through the grant UID/4674/2025 of FCT-Funda{\c c\~a}o para a Ci{\^e}ncia e a Tecnologia, Portugal.

\appendix
\section*{Appendix}
\renewcommand{\thesection}{A}
\section{Details on the proof of Lemma \ref{lem:newStranslem}}

\label{sec:appendix}

\begin{lem}\label{lem:proof-lemma-step2}
    The function in Equation~\eqref{eq:step2} is integrable on $[0,T]$, $T>0$.
\end{lem}
\begin{proof}
Let us estimate the function
\begin{equation}
t \mapsto \frac{(x-\tau\psi(t))^{2}}{t^{2H+1}}\exp\left(-\frac{(x-\tau\psi(t))^{2}}{2t^{2H}}\right)\psi(t).\label{eq:integrand}
\end{equation}
First, note that $|\psi(t)|=|\langle\varphi,M_{-}^{H}\mathbbm{1}_{[0,t)}\rangle|\le t\|M_{+}^{H}\varphi\|_{\infty}$ and $|\psi(t)|^{2}\le t^{2H}\|\varphi\|_{2}^{2}$.
The first factor in \eqref{eq:integrand} with $\tau\in[0,1]$ can
be estimated as
\[
\frac{(x-\tau\psi(t))^{2}}{t^{2H+1}}\le\frac{|x|^{2}}{t^{2H+1}}+\frac{2|x|\|M_{+}^{H}\varphi\|_{\infty}}{t^{2H}}+\frac{\|M_{+}^{H}\varphi\|_{\infty}^{2}}{t^{2H-1}}.
\]
The exponential factor in \eqref{eq:integrand} is estimated by 
\begin{align*}
\exp\left(-\frac{(x-\tau\psi(t))^{2}}{2t^{2H}}\right) & \le\exp\left(-\frac{|x|^{2}}{2t^{2H}}\right)\exp\left(\frac{\tau^{2}t^{2H}\|\varphi\|_{2}^{2}}{2t^{2H}}+\frac{|x|\tau |\psi(t)|}{t^{2H}}\right)\\
 & \le\exp\left(-\frac{|x|^{2}}{2t^{2H}}\right)\exp\left(\frac{\|\varphi\|_{2}^{2}}{2}+\frac{|x|^2}{4t^{2H}}+\frac{t^{2H}\|\varphi\|_{2}^{2}}{t^{2H}}\right).
\end{align*}
Putting everything together, the integrand function \eqref{eq:integrand} is
bounded by
\begin{multline*}
\left|\frac{(x-\tau\psi(t))^{2}}{t^{2H+1}}\exp\left(-\frac{(x-\tau\psi(t))^{2}}{2t^{2H}}\right)\psi(t)\right|\\
\le\frac{1}{t^{H}}\left(\frac{|x|^{2}}{t^{H}}+\frac{2|x|\|M_{+}^{H}\varphi\|_{\infty}}{t^{H-1}}+\frac{\|M_{+}^{H}\varphi\|_{\infty}^{2}}{t^{H-2}}\right)\\
\times\exp\left(-\frac{|x|^{2}}{2t^{2H}}\right)\exp\left(\frac{3\|\varphi\|_{2}^{2}}{2}+\frac{|x|^2}{4t^{2H}}\right)\|M_{+}^{H}\varphi\|_{\infty}.
\end{multline*}
Using the fact that $t\in[0,T]$ we further estimate the integrand as
\begin{multline*}
  \le\frac{1}{t^{2H}}\left(|x|^{2}+2T|x|\|M_{+}^{H}\varphi\|_{\infty}+T^2\|M_{+}^{H}\varphi\|_{\infty}^{2}\right)\\
  \times\exp\left(\frac{3\|\varphi\|_{2}^{2}}{2}\right)\exp\left(-\frac{|x|^{2}}{4t^{2H}}\right)\|M_{+}^{H}\varphi\|_{\infty}.
\end{multline*}
Note that the map $(0,T]\ni t\mapsto t^{-2H}\exp(-|x|^{2}/(4t^{2H}))$,
is integrable on $[0,T]$. Then, we may conclude that the double integral
\[
\frac{H}{\sqrt{2\pi}}\int_{0}^{T}\int_{0}^{1}\frac{(x-\tau\langle\varphi,\eta_{t}\rangle)^{2}}{t^{2H+1}}\mathrm{e}^{-\frac{(x-\tau\langle\varphi,\eta_{t}\rangle)^{2}}{2t^{2H}}}\langle\varphi,\eta_{t}\rangle\,\mathrm{d}\tau\,\mathrm{d}t
\]
is finite.    
\end{proof}
\begin{lem}\label{lem:IbP}
    Integration by parts in Equation~\eqref{eq:IbP}.
\end{lem}
\begin{proof}
Given that both functions $F$ and the mapping $(\varepsilon,T)\ni t\mapsto\frac{1}{t^{H}}$ are continuously differentiable on the interval $(\varepsilon,T)$, $\varepsilon>0$, applying integration by parts results in
\begin{align*}
\int_{\varepsilon}^{T}\frac{F'(t)}{t^{H}}\,\mathrm{d}t 
& =\frac{1}{T^{H}}\int_{0}^{1}\exp\left(-\frac{(x-\tau\psi(T))^{2}}{2T^{2H}}\right)\psi(T)\,\mathrm{d}\tau\\
& -\frac{1}{\varepsilon^{H}}\int_{0}^{1}\exp\left(-\frac{(x-\tau\psi(\varepsilon))^{2}}{2\varepsilon^{2H}}\right)\psi(\varepsilon)\,\mathrm{d}\tau\\
& +H\int_{\varepsilon}^{T}\frac{1}{t^{H+1}}\int_{0}^{1}\exp\left(-\frac{(x-\tau\psi(t))^{2}}{2t^{2H}}\right)\psi(t)\,\mathrm{d}\tau\,\mathrm{d}t.
\end{align*}
It is easy to obtain the estimates:
\[
\left|-\frac{1}{\varepsilon^{H}}\int_{0}^{1}\exp\left(-\frac{(x-\tau\psi(\varepsilon))^{2}}{2\varepsilon^{2H}}\right)\psi(\varepsilon)\,\mathrm{d}\tau\right|\le\|M_{+}^{H}\varphi\|_{\infty}\varepsilon^{1-H}
\]
and 
\[
\left|\frac{1}{t^{H+1}}\int_{0}^{1}\exp\left(-\frac{(x-\tau\psi(t))^{2}}{2t^{2H}}\right)\psi(t)\,\mathrm{d}\tau\right|\le\|M_{+}^{H}\varphi\|_{\infty}\frac{1}{t^{H}}.
\]
As the function $(0,T]\ni t\mapsto\|M_{+}^{H}\varphi\|_{\infty}\frac{1}{t^{H}}$
is integrable, then an application of the Lebesgue dominated convergence theorem as $\varepsilon\to0$ yields
\begin{align*}
\int_{0}^{T}\frac{F'(t)}{t^{H}}\,\mathrm{d}t & =\frac{1}{T^{H}}\int_{0}^{1}\exp\left(-\frac{(x-\tau\psi(T))^{2}}{2T^{2H}}\right)\psi(T)\,\mathrm{d}\tau\\
 & +H\int_{0}^{T}\frac{1}{t^{H+1}}\int_{0}^{1}\exp\left(-\frac{(x-\tau\psi(t))^{2}}{2t^{2H}}\right)\psi(t)\,\mathrm{d}\tau\,\mathrm{d}t.
\end{align*}
This shows the claim.
    \end{proof}

\renewcommand{\thesection}{B}
\section{Proof of Proposition~\ref{prop:S-Transf-Lasterm-U-funct}}
\label{app:prop}
\begin{proof}We have to show that for any $x\in\mathbb{R}$ and $H \in (0,1)$ the expression of the $S$-transform of $\xi(x)$ given in \eqref{eq:newStrans} satisfies the conditions of Definition~\ref{def:U-functional}.
Consider the measure space $\big((0,T]\times[0,1],\mathcal{B}((0,T])\otimes\mathcal{B}([0,1])),\mathrm{d}t\otimes\mathrm{d}\tau\big)$
and fix $(t,\tau)\in(0,T]\times[0,1]$. We consider only the last summand since it is the most intricate one, and the other two can be estimated similarly. The corresponding functional to be integrated is denoted by
\[
S_{1}\ni\varphi\mapsto U_{t,\tau}(\varphi):=\frac{(x-\tau\langle\varphi,\eta_{t}\rangle)^{2}}{t^{3H+1}}\exp\left(-\frac{(x-\tau\langle\varphi,\eta_{t}\rangle)^{2}}{2t^{2H}}\right)\langle\varphi,\eta_{t}\rangle.
\]
Let us check the first condition of Definition~\ref{def:U-functional}. Given $\varphi,\theta\in S_{1}$
we have to show that the function 
\[
\mathbb{R}\ni y\mapsto U_{t,\tau}(\varphi+y\theta)\in\mathbb{C}
\]
extends to an entire function on $\mathbb{C}$. This follows easily
from the fact that the function $\mathbb{R}\ni y\mapsto\langle\varphi,\eta_{t}\rangle+y\langle\theta,\eta_{t}\rangle$
is entire, and the composition of entire functions is again an entire
function. Next, we prove the second condition of Definition~\ref{def:U-functional}. Let
$z\in\mathbb{C}$ be given. The following estimate holds
\begin{align*}
& |U_{t,\tau}(z\varphi)| \\
& =\left|\langle z\varphi,\eta_{t}\rangle\frac{(x-\tau\langle z\varphi,\eta_{t}\rangle)^{2}}{t^{2H+1}}\exp\left(-\frac{(x-\tau\langle z\varphi,\eta_{t}\rangle)^{2}}{2t^{2H}}\right)\right|\\
 & \le\frac{|z|\|\varphi\|_2}{t^{2H+1}}\big(|x|+\tau|z|t\|M_{+}^{H}\varphi\|_{\infty}\big)^{2}\exp\left(-\frac{x^{2}-2\tau x\langle z\varphi,\eta_{t}\rangle+\tau^{2}\langle z\varphi,\eta_{t}\rangle^{2}}{2t^{2H}}\right)\\
 & \le\frac{2}{t^{2H+1}}\big(|x|^{2}+t^{2}|z|^{2}\|M_{+}^{H}\varphi\|_{\infty}^{2}\big)\exp\left(-\frac{|x|^{2}}{4t^{2H}}+\frac{3}{2}|z|^{2}\|\varphi\|_{2}^{2}\right)\exp\big(|z|\|\varphi\|_{2}\big)\\
 & =\frac{2}{t^{2H+1}}\exp\left(|x|^{2}+t^{2}|z|^{2}\|M_{+}^{H}\varphi\|_{\infty}^{2}-\frac{|x|^{2}}{4t^{2H}}+\frac{5}{2}|z|^{2}\|\varphi\|_{2}^{2}\right)\\
 & =\frac{2}{t^{2H+1}}\exp\left(|x|^{2}-\frac{|x|^{2}}{4t^{2H}}\right)\exp\left(T^{2}|z|^{2}\|M_{+}^{H}\varphi\|_{\infty}^{2}+\frac{5}{2}|z|^{2}\|\varphi\|_{2}^{2}\right)\\
 & \le C(t, \tau, x,H)\exp\left(\left(T^{2}+\frac{5}{2}\right)|z|^{2}\|\varphi\|^{2}\right),
\end{align*}
 where $\|\cdot\|$ is a continuous norm on $S_{1}$ and $C(t,\tau,x,H)$ is given by
\[
C(t,\tau,x,H):=\frac{2}{t^{2H+1} }\exp\left(|x|^{2}-\frac{|x|^{2}}{4t^{2H}}\right).
\]
Thus, $U_{t,\tau}$ is a $U$-functional. Moreover, the function $(0,T] \times [0,1] \ni (t, \tau) \mapsto C(t,\tau, x,H)$ is $\mathrm{d}t \otimes\mathrm{d}\tau$-integrable, which implies that 
\[
S_{1}\ni\varphi\mapsto\int_{0}^{T}\int_{0}^{1}U_{t,\tau}(\varphi)\,\mathrm{d}\tau\,\mathrm{d}t
\]
is a $U$-functional by Corollary~\ref{cor:hida_integral}.
\end{proof}

\renewcommand{\thesection}{C}
\section{Other Lemmas}
In this concluding appendix, we present several lemmas. The proofs are based on computations involving Gaussian integrals and estimations of basic polynomials. While not challenging, these proofs require substantial space. As such, we leave them to interested readers. Below, the constants $c$ and $c'$ are defined by

\begin{equation}\label{eq:constants-c}
c:=\frac{1}{\frac{4}{\sigma}+\sigma^{\prime}\gamma^{2}},\qquad c^{\prime}:=-\frac{\sigma^{\prime}}{4}+c\frac{{\sigma^{\prime}}^{2}\gamma^{2}}{4}.
\end{equation}
\begin{lem}
\label{lem:momgenfct}Given $\sigma,\sigma^{\prime}>0$ and $\gamma\in\mathbb{R}$, define $h:\mathbb{R}^{4}\longrightarrow\mathbb{C}$ by
\[
h(\alpha,\beta,u,v):=\exp\left(-\frac{1}{\sigma}u^{2}+\alpha u\right)\exp\left(-\frac{1}{\sigma^{\prime}}v^{2}+\mathrm{i}\beta v\right)\exp(\mathrm{i}\gamma uv).
\]
Then, for every $\alpha,\beta\in\mathbb{C}$, $h(\alpha,\beta,\cdot,\cdot)\in L_{\mathbb{C}}^{1}(\mathrm{d}u\otimes\mathrm{d}v)$
and we have 
\[
\int_{\mathbb{R}^{2}}h(\alpha,\beta,u,v)\,\mathrm{d}u\,\mathrm{d}v=2\pi\sqrt{c\sigma^{\prime}}\exp\left(-\frac{\sigma^{\prime}}{4}\beta^{2}+c\big(\alpha-\frac{\sigma^{\prime}\gamma}{2}\beta\big)^{2}\right)\in\mathbb{C}.
\]
Moreover, the function
\[
I_{h}:\mathbb{R}^{2}\longrightarrow\mathbb{C},\;(\alpha,\beta)\mapsto\int_{\mathbb{R}^{2}}h(\alpha,\beta,u,v)\,\mathrm{d}u\,\mathrm{d}v
\]
is in $C^{\infty}(\mathbb{R}^2)$ and the partial derivatives are given
by 
\[
\frac{\partial^{n+m}I_{h}}{\partial\alpha^{n}\partial\beta^{m}}(\alpha,\beta)=\int_{\mathbb{R}^{2}}h(\alpha,\beta,u,v)u^{n}(iv)^{m}\,\mathrm{d}u\,\mathrm{d}v,\qquad n,m\in\mathbb{N}_{0}.
\]
\end{lem}

\begin{rem}
\label{rem:explicitder}The partial derivatives of $I_{h}$ from Lemma~\ref{lem:momgenfct}
may be computed directly. In fact, if we rewrite $I_{h}$ as 
\[
I_{h}(\alpha,\beta)=2\pi\sqrt{c\sigma^{\prime}}\exp\left(c\alpha^{2}+c^{\prime}\beta^{2}-c\sigma^{\prime}\gamma\alpha\beta\right),\quad\alpha,\beta\in\mathbb{R},
\]
then it is clear that the partial derivatives of $I_{h}$ have the
form 
\[
\frac{\partial^{n+m}I_{h}}{\partial\alpha^{n}\partial\beta^{m}}(\alpha,\beta)=I_{h}(\alpha,\beta)\ p_{n+m}(\alpha,\beta),\quad\alpha,\beta\in\mathbb{R},\qquad n,m\in\mathbb{N}_{0},
\]
with $p_{k},\ k\in\mathbb{N}_{0},$ being a polynomial in two variables
of degree $k$. Note that the coefficients of $p_{n+m}$ contain different
powers of $c,c^{\prime},\sigma^{\prime}$ and $\gamma$. An explicit
formula is proven in Lemma~\ref{lem:explicitform} below.
\end{rem}

\begin{lem}
\label{lem:explicitform}Let $\sigma,\sigma'>0$ and $\gamma\in\mathbb{R}$ be given. Define the function
\[
I_{h}\colon\mathbb{R}^{2}\longrightarrow\mathbb{R},\;(\alpha,\beta)\mapsto2\pi\sqrt{c\sigma^{\prime}}\exp(c\alpha^{2}+c^{\prime}\beta^{2}-c\sigma^{\prime}\gamma\alpha\beta).
\]
Then the partial derivatives of $I_{h}$ are given by the formula
\[
\frac{\partial^{n+m}I_{h}}{\partial\alpha^{n}\partial\beta^{m}}(\alpha,\beta)=I_{h}(\alpha,\beta)p_{n+m}(\alpha,\beta),\qquad n,m\in\mathbb{N}_{0},
\]
with
\begin{align*}
 & p_{n+m}(\alpha,\beta)\\
 &=\sum_{i=0}^{m}\sum_{k_{1}=0}^{i}\sum_{\genfrac{}{}{0pt}{3}{k_{2}=0}{i-k_{1}-k_{2}\ \mathrm{even}}}^{i-k_{1}}\sum_{j=m-i}^{n}\sum_{\genfrac{}{}{0pt}{3}{l=0}{n-j-l\ \mathrm{even}}}^{n-j}\genfrac{(}{)}{0pt}{}{m}{i}\frac{j!}{(j-m+i)!}R\left(i,\frac{i-k_{1}-k_{2}}{2},k_{2}\right)\\
 & \times R\left(n,\frac{n-j-l}{2},l\right)c^{n+k_{1}-\frac{n-j-l}{2}}{c^{\prime}}^{k_{2}+\frac{i-k_{1}-k_{2}}{2}}(\sigma^{\prime}\gamma)^{k_{1}+j}\alpha^{k_{1}+l}\beta^{k_{2}+j-m+i}
\end{align*}
where we set 
\[
R(i,j,k):=\frac{i!}{j!(i-2j)!}\binom{i-2j}{k}(-1)^{i-2j-k}2^{k},\qquad i,j,k\in\mathbb{N}_{0}.
\]
\end{lem}

\begin{lem}
\label{rem:Ih-bound}For any $0\le\tau_{i}\le1$, $i=1,2$, and $\alpha,\beta,\sigma,\sigma'$ as in Equation~\ref{eq:constants}, the following bound holds
\begin{equation}
I_{h}(\alpha,\beta)=2\pi\sqrt{c\sigma^{\prime}}\exp(c\alpha^{2}+c^{\prime}\beta^{2}-c\sigma^{\prime}\gamma\alpha\beta)\le\frac{\pi}{\sqrt{1-\varepsilon^{2}}}\exp\left(\varepsilon^{2}\frac{x^{2}}{t^{2H}}\right).\label{eq:Ih-bound}
\end{equation}
\end{lem}

\begin{lem}
\label{lem:polybound}Let $0<\varepsilon\leq1$ be given. The polynomial $q:\mathbb{R}^{2}\longrightarrow\mathbb{R}$ defined by
\[ q(x,y)=\varepsilon^{2}x^{4}+2\varepsilon^{2}x^{2}+\varepsilon^{2}y^{4}+2\varepsilon^{2}y^{2}-2\varepsilon^{2}x^{3}y-2\varepsilon^{2}xy^{3}-2\varepsilon^{2}x^{2}y^{2}+4xy
\]
is bounded by 4 on $[0,1]^{2}$.
\end{lem}

\bibliographystyle{elsarticle-num}
\bibliography{luis}
\end{document}